\documentclass[12pt]{article}
\oddsidemargin 0in \evensidemargin 0in \textheight 9in \textwidth
6.5in \topmargin 0in \headheight 0in
\parindent 20 pt
\headsep 0 in

\usepackage{amssymb, amsthm}

\newtheorem{thm}{Theorem}

\newtheorem{theorem}{Theorem}[section]
\newtheorem{corollary}[theorem]{Corollary}

\newtheorem{lemma}[theorem]{Lemma}

\newtheorem{construction}[theorem]{Construction}


\raggedbottom
\def\card#1{\vert #1 \vert}
\def\gpindex#1#2{\card {#1\colon #2}}
\def\irr#1{{\rm  Irr}(#1)}

\def\ker#1{{\rm ker} (#1)}

\def\NN{{\cal N}}
\def\MM{{\cal M}}
\def\MMM{{\cal M}}
\def\TT{{\cal T}}
\def\UU{{\cal U}}

\def\BB{{\cal B}}
\def\phi{\varphi}

\newcommand{\stab}[2]{{#1}_{#2}}

\newcommand{\Bpi}[1]{{\rm B}_\pi (#1)}
\newcommand{\Ipi}[1]{{\rm I}_\pi (#1)}
\newcommand{\Dpi}[1]{{\rm D}_\pi (#1)}
\newcommand{\ZZ}{{\cal Z}}
\newcommand{\SSS}{{\cal S}}
\newcommand{\AAA}{{\cal A}}
\newcommand{\pair}[1]{{\rm pair} (#1)}
\newcommand{\LL}{{\cal L}}

\begin{document}

\title{Lifts of partial characters with respect to a chain of normal subgroups}

\author {
       Mark L.\ Lewis
    \\ {\it Department of Mathematical Sciences, Kent State University}
    \\ {\it Kent, Ohio 44242}
    \\ E-mail: lewis@math.kent.edu
       }
\date{December 11, 2008}

\maketitle

\begin{abstract}
In this paper, we consider lifts of $\pi$-partial characters with
the property that the irreducible constituents of their restrictions
to certain normal subgroups are also lifts.  We will show that such
a lift must be induced from what we call an inductive pair, and
every character induced from an inductive pair is a such a lift.
With this condition, we will get a lower bound on the number of such
lifts.

MSC primary : 20C15, secondary : 20C20

Keywords : $\pi$-partial characters, Brauer characters, lifts
\end{abstract}

\section{Introduction}

In his seminal paper \cite{pisep}, Isaacs introduced the idea of
$\pi$-partial characters that for $\pi$-separable groups are an
analog of $p$-Brauer characters for $p$-solvable groups.  Another
approach to $\pi$-partial characters, which is the one we will take,
can be found in \cite{pipart}.

Let $\pi$ be a set of primes and let $G$ be a $\pi$-separable group.
Take $G^0$ to be the set of $\pi$-elements in $G$.  The
$\pi$-partial characters of $G$ are to defined to be the
restrictions to $G^0$ of all of the characters of $G$.  A
$\pi$-partial character is called irreducible if it cannot be
written as the sum of other $\pi$-partial characters, and we write
$\Ipi G$ for the set of irreducible $\pi$-partial characters of $G$.

If $\chi$ is a character of $G$, then we write $\chi^0$ for the
restriction of $\chi$ to $G^0$.  When $\phi$ is a $\pi$-partial
character of $G$, we say that $\chi$ is a {\it lift} of $\phi$ if
$\chi^0 = \phi$.  In general, we say $\chi$ is a {\it $\pi$-lift} if
$\chi^0 \in \Ipi G$.  Because of the way we constructed the
$\pi$-characters, we know that all $\pi$-partial characters have
$\pi$-lifts. One main thread of research in this subject has been to
find sets of canonical lifts for $\Ipi G$. A set of $\pi$-lifts
${\cal A}$ is called canonical if restriction is a bijection from
${\cal A}$ to $\Ipi G$, and there is some canonical property used to
determine ${\cal A}$. The prototypical example of a canonical set of
$\pi$-lifts is the set $\Bpi G$ that was defined in \cite{pisep} by
Isaacs. If $2$ is not in $\pi$, then using a suggestion of Dade,
Isaacs has constructed in \cite{indres} a second set of canonical
$\pi$-lifts $\Dpi G$.  In our papers \cite{chains} and
\cite{lattice}, we discuss two other ways to construct sets of
canonical $\pi$-lifts.

Recently, J. P. Cossey has looked at a different question regarding
lifts.  In particular, he has focused on a single irreducible
$\pi$-partial character $\phi$, and he has studied the set of all
lifts of $\phi$ in \cite{bound}, \cite{vertex}, and \cite{behav}. In
so doing, he has asked a number of questions regarding the lifts of
$\phi$. One question he has asked in a talk at the 2007 Zassenhaus
group theory conference (see \cite{zass}) is whether we can
characterize all the lifts of $\phi$, and in particular, whether
there is some character pair from which all the lifts of $\phi$ are
induced. One purpose of this note is to present an example that
shows that no such pair needs to exist.

However, we actually wish to do more.  We wish to study further the
question of characterizing all of the lifts of a $\pi$-partial
character.  At this time, the question of characterizing all of the
lifts of $\phi$ seems out of reach. Thus, it seems reasonable as a
first step to add additional conditions that the lifts must satisfy
and then characterize the lifts that satisfy these additional
conditions. For example, one condition that it makes sense to look
at are lifts of $\phi$ whose restrictions to every normal subgroup
have irreducible constituents that are $\pi$-lifts. The $\pi$-lifts
in $\Bpi G$ and $\Dpi G$ have this property.  It seems reasonable to
ask if there are any other $\pi$-lifts with this property.

Rather than look at all normal subgroups at once, we restrict our
attention to a single normal series. A {\it normal $\pi$-series} for
$G$ is a set $\NN = \{ 1 = N_0, N_1, \dots, N_n = G \}$ of normal
subgroups in $G$ where $N_i \le N_{i+1}$ and $N_{i+1}/N_i$ is either
a $\pi$-group or a $\pi'$-group for each $i$. We say $\chi$ is an
{\it $\NN$-$\pi$-lift} if for each $N \in \NN$, the irreducible
constituents of $\chi_N$ are $\pi$-lifts. In particular, since $G
\in \NN$, this requires that $\chi$ be a $\pi$-lift.

In this note, we characterize the $\NN$-$\pi$-lifts of $\phi$.
Notice that $\chi$ is an $\NN$-lift for all $\pi$-series $\NN$ if
and only if the irreducible constituents of $\chi_N$ are $\pi$-lifts
for every normal subgroup $N$ of $G$, and so, by characterizing the
$\NN$-$\pi$-lifts for all $\NN$, we obtain a characterization of the
lifts whose restrictions to normal subgroups have irreducible
constituents that are $\pi$-lifts.  In \cite{chains} and
\cite{lattice}, we discussed two ways of constructing canonical
$\pi$-lifts based on $\NN$.  We will use $\Bpi {G : \NN}$ to denote
the canonical $\pi$-lifts developed in \cite{chains} and $\Bpi {G :
*\NN}$ for the $\pi$-lifts found in \cite{lattice}.

To do this, we will use some tools developed in \cite{chains}. Let
$G$ be a $\pi$-separable group and $\NN$ a $\pi$-series for $G$. For
every character $\chi \in \irr G$, we associated in \cite{chains} a
character pair $(T,\tau)$ which we called a {\it self-stabilizing
pair} with respect to $\NN$.  (A character pair $(T,\tau)$ is a
subgroup $T \le G$ and character $\tau \in \irr T$.  The
self-stabilizing pair is well-defined only up to conjugacy.) This
pair has the properties of a nucleus in that $\tau^G = \chi$ and
$\tau$ factors into a product of a $\pi$-special character and a
$\pi'$-special character. In this paper, we will define what it
means for a pair $(V,\gamma)$ to be {\it inductive} for $\NN$.  With
this definition, we determine which characters in $\irr G$ are
$\NN$-$\pi$-lifts.

\begin{thm}\label{main1}
Let $\pi$ be a set of primes, let $G$ be a $\pi$-separable group,
and $\NN$ a normal $\pi$-series for $G$.  Consider a character $\chi
\in \irr G$ with self-stabilizing pair $(V,\gamma)$ with respect to
$\NN$.  Assume that $\gamma = \alpha \beta$ where $\alpha$ is
$\pi$-special and $\beta$ is $\pi'$-special. Then the following are
equivalent:
\begin{enumerate}
\item $\chi$ is an $\NN$-$\pi$-lift.
\item $(V,\gamma)$ is inductive with respect to $\NN$.
\item $(V,\alpha)$ is inductive with respect to $\NN$ and $\beta$ is
linear.
\end{enumerate}
\end{thm}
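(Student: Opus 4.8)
The plan is to establish the three equivalences by proving $(1)\Leftrightarrow(2)$ by induction on the length $n$ of $\NN$, and then reading off $(2)\Leftrightarrow(3)$. Two elementary observations drive everything. First, a $\pi'$-special character $\beta$ is constant on $\pi$-elements with value $\beta(1)$: restricting $\beta$ to a cyclic $\pi$-subgroup forces its constituents to be $\pi'$-special of $\pi$-power degree and $\pi$-power determinantal order, hence trivial. Thus $\beta^0=\beta(1)1_{G^0}$, so multiplying any character by a linear $\pi'$-special character changes neither its restriction to $\pi$-elements nor that of any of its restrictions to subgroups, and $(\alpha\beta)^0=\beta(1)\alpha^0$. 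Second, since restriction to $\pi$-elements commutes with induction, $\chi^0=(\gamma^G)^0=(\gamma^0)^G=\beta(1)\,(\alpha^0)^G$; hence if $\chi$ is a $\pi$-lift then $\beta(1)=1$, i.e.\ $\beta$ is linear.

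For $(1)\Leftrightarrow(2)$ I would induct on $n$. When $n\le 1$, $G$ is a $\pi$-group or a $\pi'$-group and both statements are immediate. In general put $M=N_{n-1}$ and $\NN'=\{N_0,\dots,N_{n-1}\}$, a normal $\pi$-series for $M$. By Clifford's theorem, for every $i<n$ and every irreducible constituent $\theta$ of $\chi_M$, the irreducible constituents of $\chi_{N_i}$ and of $\theta_{N_i}$ agree up to $G$-conjugacy, and being a $\pi$-lift is conjugacy invariant; therefore $\chi$ is an $\NN$-$\pi$-lift if and only if $\chi$ is a $\pi$-lift and every irreducible constituent of $\chi_M$ is an $\NN'$-$\pi$-lift. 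By the inductive hypothesis the last clause says the self-stabilizing pair of such a $\theta$ with respect to $\NN'$ is inductive for $\NN'$. The construction of self-stabilizing pairs in \cite{chains} produces $(V,\gamma)$ from a self-stabilizing pair of a suitable constituent $\theta$ by the single step that passes the top section $G/M$ (a $\pi$- or $\pi'$-section), and the recursive clause in the definition of ``inductive for $\NN$'' is precisely the condition this step is required to meet; combining this with the first observation (``$\chi$ a $\pi$-lift'' $\Leftrightarrow$ ``$\beta$ linear'', the top-level requirement in the definition of inductive), we obtain $(1)\Rightarrow(2)$. The converse $(2)\Rightarrow(1)$ follows directly from the general fact, established earlier in the paper, that every character induced from a pair inductive for $\NN$ is an $\NN$-$\pi$-lift.

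For $(2)\Leftrightarrow(3)$: granting that $\beta$ is linear, multiplying by the linear $\pi'$-special $\beta$ leaves all restrictions to $\pi$-elements unchanged (and the Clifford data on normal subgroups is insensitive to such a twist, again because a linear $\pi'$-special character restricts trivially to $\pi$-elements), so the conditions defining ``inductive for $\NN$'' hold for $(V,\gamma=\alpha\beta)$ exactly when they hold for $(V,\alpha)$; this is the content of $(2)\Leftrightarrow(3)$ in that case. That $\beta$ must be linear is forced because $(2)$ implies $\chi$ is a $\pi$-lift (via $(2)\Rightarrow(1)$), whence the second observation applies. Hence $(2)$ and $(3)$ are each equivalent to the case just treated.

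The step I expect to be the main obstacle is the inductive core of $(1)\Leftrightarrow(2)$: verifying that one step of the self-stabilizing construction across the top section $G/M$ matches the recursive clause of ``inductive'' on the nose, which requires controlling, via Mackey and Clifford theory, how $\gamma^G$ restricts to the normal subgroups in $\NN$ and how multiplicities behave. The delicate case is a fully ramified $\pi'$-section $G/M$, where the correspondence between $\chi$ and its constituent $\theta$ on $M$, and between their self-stabilizing pairs, must be tracked carefully; the $\pi$-section case and the unramified $\pi'$-case are routine.
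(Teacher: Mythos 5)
The central gap is your ``first elementary observation'': it is \emph{false} that a $\pi'$-special character is constant on $\pi$-elements with value $\beta(1)$. That is true only when $\beta$ is linear (then $o(\beta)$ is a $\pi'$-number, so $\beta(x)$ is a root of unity whose order divides both a $\pi$-number and a $\pi'$-number, hence $\beta(x)=1$), and it can fail badly otherwise: with $\pi=\{3\}$, $\pi'=\{2\}$, the degree-$2$ character $\beta$ of ${\rm SL}_2(3)$ with trivial determinant is $2$-special, yet $\beta(x)=-1\neq 2=\beta(1)$ on elements of order $3$. Your justification --- that the constituents of $\beta$ restricted to a cyclic $\pi$-subgroup are $\pi'$-special --- is invalid, since the restriction theorems for special characters apply to normal (or subnormal) subgroups, and a cyclic $\pi$-subgroup need not be subnormal. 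This matters because everything in your argument that produces the linearity of $\beta$ rests on it: the deduction ``$\chi$ a $\pi$-lift $\Rightarrow\beta(1)=1$'' via $\chi^0=(\gamma^0)^G=\beta(1)(\alpha^0)^G$, and the step in $(2)\Leftrightarrow(3)$ where ``the second observation applies.'' But the linearity of $\beta$ is precisely the hard content of the theorem. The paper obtains it by a genuinely different and non-elementary route: choose a compatible lift $\psi$ (e.g.\ in $\Bpi{G}$ or $\Bpi{G:\NN}$) with $\psi^0=\chi^0$, build a character tower for $\psi$ whose terms have the same restrictions to $G^0$ as the tower for $\chi$, apply Lemma \ref{commonchains} to get $V\le T$ where $(T,\tau)$ is the self-stabilizing pair of $\psi$, apply Lemma \ref{degree} to see $\tau(1)$ is a $\pi$-number, and then the degree comparison $|T:V|\,\alpha(1)\beta(1)=\tau(1)$ forces $\beta(1)=1$. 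Nothing in your proposal replaces this step.

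A secondary problem is the inductive core of $(1)\Rightarrow(2)$. The definition of ``inductive for $\NN$'' has no recursive clause: it asks, for each $N\in\NN$, that $\gamma_{V\cap N}=a\eta$ be homogeneous with $(\eta^N)^0$ irreducible, and its requirement at the top term $G\in\NN$ is that $(\gamma^G)^0$ be irreducible, not that $\beta$ be linear. In fact no induction on the length of $\NN$ is needed: homogeneity of $\gamma_{V\cap N}$ is automatic from the self-stabilizing property, and once $\beta$ is known to be linear, $(\eta^N)^0$ is irreducible because $(\eta\xi)^N$ is an irreducible constituent of $\chi_N$ (self-stabilizing property), $\chi$ is an $\NN$-$\pi$-lift, and $\xi=\beta_{V\cap N}$ is linear $\pi'$-special, hence trivial on $\pi$-elements. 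The step you yourself flag as ``the main obstacle'' --- matching one step of the self-stabilizing construction across the top section, including the fully ramified case --- is exactly what your sketch leaves unproven, and it is bypassed rather than solved by the paper's argument. The parts you do have are fine: $(2)\Rightarrow(1)$ is Lemma \ref{inductive}, and $(3)\Rightarrow(2)$ (given $\beta$ linear) is Lemma \ref{factored}.
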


We can use this characterization of the $\NN$-$\pi$-lifts to obtain
a lower bound on the number of $\NN$-$\pi$-lifts.

\begin{thm}\label{main2}
Let $\pi$ be a set of primes, let $G$ be a $\pi$-separable group,
and $\NN$ a normal $\pi$-series for $G$.  Fix $\pi$-partial
character $\phi \in \Ipi G$ and character $\chi \in \Bpi {G : \NN}$
so that $\chi^0 = \phi$.  Let $(V,\gamma)$ be a self-stabilizing
pair for $\chi$.  Then the following are true.

\begin{enumerate}
\item There is an injective map from the linear characters of $V/V'$
with $\pi'$-order to the $\NN$-$\pi$-lifts of $\phi$.
\item The number of $\NN$-$\pi$-lifts for $\phi$ is at least
$\gpindex V{V'}_{\pi'}$.
\end{enumerate}
\end{thm}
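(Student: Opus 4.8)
The plan is to prove part~(1) directly and read off part~(2) from it. Write $A = V/V'$. The linear characters of $A$ of $\pi'$-order form the Hall $\pi'$-subgroup of the (abelian) group of linear characters of $A$, so there are exactly $\gpindex V{V'}_{\pi'}$ of them; hence once part~(1) is in hand, part~(2) is immediate. For the injection in part~(1) I would take the map $\lambda \mapsto (\gamma\lambda)^G$, where $\lambda$ runs over the linear characters of $V/V'$ of $\pi'$-order, regarded as linear characters of $V$ that are trivial on $V'$.

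First I would check that this map lands among the $\NN$-$\pi$-lifts of $\phi$. Since $\lambda$ is linear of $\pi'$-order, $\beta\lambda$ is again $\pi'$-special, so $\gamma\lambda = \alpha(\beta\lambda)$ is the (necessarily unique) factorization of $\gamma\lambda$ into a $\pi$-special times a $\pi'$-special character. Because $\chi = \gamma^G$ lies in $\Bpi{G : \NN}$ it is itself an $\NN$-$\pi$-lift, so Theorem~\ref{main1} applied to $\chi$ gives that $(V,\alpha)$ is inductive for $\NN$; together with the fact that the $\pi'$-special part $\beta\lambda$ is linear, the characterization of inductive pairs yields that $(V,\gamma\lambda)$ is inductive for $\NN$, so $(\gamma\lambda)^G$ is irreducible and is an $\NN$-$\pi$-lift. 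That it is a lift of $\phi$ is the easy point: for any $\pi$-element $x \in V$ the root of unity $\lambda(x)$ has order dividing both $o(\lambda)$, a $\pi'$-number, and $o(x)$, a $\pi$-number, hence equals $1$; thus $\lambda^0$ is trivial, $(\gamma\lambda)^0 = \gamma^0$, and $\bigl((\gamma\lambda)^G\bigr)^0 = (\gamma^0)^G = (\gamma^G)^0 = \chi^0 = \phi$.

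The heart of the matter — and the step I expect to be the main obstacle — is injectivity. Suppose $(\gamma\lambda_1)^G = (\gamma\lambda_2)^G$. Each $(V,\gamma\lambda_i)$ serves as a self-stabilizing pair for this common character, and self-stabilizing pairs are unique up to $G$-conjugacy; matching first coordinates, there is $g \in \norm{G}{V}$ with $(\gamma\lambda_1)^g = \gamma\lambda_2$. Comparing $\pi$-special and $\pi'$-special parts and using uniqueness of that factorization gives $\alpha^g = \alpha$. At this point I would invoke the crucial rigidity built into the notion of an inductive pair: because $(V,\alpha)$ is inductive, an element of $\norm{G}{V}$ cannot fix $\alpha$ without already lying in $V$, i.e. $\norm{G}{V} \cap \stab{G}{\alpha} = V$. (This is exactly the place where the precise definition of ``inductive'' has to be unwound; the analogous statement can fail for a general self-stabilizing pair, but the inductive hypothesis is designed to force it.) Granting this, $g \in V$, so conjugation by $g$ fixes $\gamma\lambda_1$, hence $\gamma\lambda_1 = \gamma\lambda_2$ and therefore $\lambda_1 = \lambda_2$.

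Combining these steps proves part~(1), and part~(2) then follows from the count of $\pi'$-order linear characters of $V/V'$ recorded above. The only genuinely nonroutine ingredient beyond Theorem~\ref{main1} is the rigidity statement $\norm{G}{V} \cap \stab{G}{\alpha} = V$ for inductive pairs $(V,\alpha)$; everything else is bookkeeping with $\pi$-special factorizations, the uniqueness of self-stabilizing pairs, and the behaviour of $(\cdot)^0$ under induction.
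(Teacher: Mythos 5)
Your argument agrees with the paper on the first half (that each $(\gamma\lambda)^G$ is an irreducible $\NN$-$\pi$-lift of $\phi$: the paper gets this from Corollary \ref{selfind}, Lemma \ref{factored}, and Lemma \ref{inductive}, essentially as you do), but you take a genuinely different route on the injectivity step, which is the crux. The paper obtains injectivity from its source-map machinery: it checks that the map $\ZZ$ of \cite{chains} is $\pi$-closed (because $\ZZ(H,\theta)$ has first coordinate $H\cap N$, which is normal in $H$) and then quotes the Navarro-type Theorem \ref{lat} (Theorem 2.1 of \cite{lattice}) through Corollary \ref{map}. You instead argue directly: if $(\gamma\lambda_1)^G=(\gamma\lambda_2)^G$, then both $(V,\gamma\lambda_i)$ are self-stabilizing pairs for the common character, hence $G$-conjugate, and uniqueness of the $\pi$-special/$\pi'$-special factorization plus your rigidity claim push the conjugating element into $V$. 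This works and is arguably more elementary and self-contained, but two asserted steps need to be filled in. First, the rigidity $\norm{G}{V}\cap\stab{G}{\alpha}=V$ is true but has nothing to do with inductivity per se: it holds for any pair with $\alpha^G$ irreducible (which inductive pairs satisfy, since $(\alpha^G)^0$ is irreducible), because an element $g\in\norm{G}{V}\setminus V$ fixing $\alpha$ would make $\alpha$ invariant in the cyclic-over-$V$ group $\langle V,g\rangle$, so $\alpha$ would extend there and $\alpha^G$ would be reducible; your aside that this ``can fail for a general self-stabilizing pair'' is backwards, since $\gamma^G$ is irreducible by definition for such pairs. Second, and more substantively, your claim that $(V,\gamma\lambda_i)$ is again a self-stabilizing pair is not automatic; it is exactly Lemma \ref{indself} of the paper (applied with the linear character $\lambda_i$, whose linearity supplies the homogeneity hypothesis), proved there by an induction on $|G|$, and the paper's own proof of Theorem \ref{main2} invokes it as well. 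Note that inductivity alone would not suffice at this point: the Section 4 example exhibits non-conjugate inductive pairs inducing the same character, so your conjugacy step really does require the self-stabilizing property. With Lemma \ref{indself} and the easy rigidity fact supplied, your proof of injectivity is complete and avoids the appeal to the $\pi$-closed source-map theorem; the counting in part (2) from part (1) is correct as you state it.
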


We return to our initial question of classifying the $\pi$-lifts
with the property that the restriction to every normal subgroup has
constituents that are lifts.  These $\pi$-lifts can now be
classified since they are precisely the $\pi$-lifts that are
$\NN$-$\pi$-lifts for all normal $\pi$-series $\NN$.  Unfortunately,
it is not clear whether this classification is practical.

Acknowledgement:  We would like to thank J. P. Cossey and I. M.
Isaacs for several useful discussions regarding this paper.

\section{Characterizing $\NN$-lifts}


For now, we will let $\NN$ be a collection of normal subgroups of
$G$.  We say $\LL$ is a {\it compatible} set of $\pi$-lifts for
$\NN$ if the following hold:
\begin{enumerate}
\item For each $N \in \NN$, we have $\LL (N) \subseteq \irr N$ and
$\alpha \mapsto \alpha^0$ is bijection from $\LL (N)$ to $\Ipi N$.
\item If $M \le N$ with $M,N \in \NN$, then the irreducible
constituents of $\alpha_M$ lie in $\LL (M)$ for all $\alpha \in \LL
(N)$.
\end{enumerate}

In \cite{pisep}, Isaacs proved that if $\chi \in \Bpi G$ and $N$ is
a normal subgroup of $G$, then the irreducible constituents of
$\chi_N$ lie in $\Bpi N$.  It follows that $\Bpi {\cdot}$ is a
compatible set of $\pi$-lifts for all $\NN$. Similarly, when $2
\not\in \pi$, Isaacs proved in \cite{indres} if $\chi \in \Dpi G$
and $N$ is any normal subgroup that the irreducible constituents of
$\chi_N$ lie in $\Dpi N$, and so, $\chi$ will be a $\NN$-$\pi$-lift
for all $\NN$. Hence, $\Dpi {\cdot}$ is a compatible set of
$\pi$-lifts for all $\NN$. The results proved in \cite{chains} and
\cite{indlift} imply that both $\Bpi {\cdot: \NN}$ and $\Bpi {\cdot
: * \NN}$ are both compatible sets of $\pi$-lifts for $\NN$ when
$\NN$ is a normal $\pi$-series. We now show that compatible sets of
$\pi$-lifts are compatible with conjugation.

\begin{lemma}\label{conjugacy}
Let $\pi$ be a set of primes, let $G$ be a $\pi$-separable group,
let $\NN$ be a collection of normal subgroups of $G$, and let $\LL$
be a compatible set of $\pi$-lifts for $\NN$.  If $N \in \NN$ and
$\alpha \in \LL (N)$, then $\alpha^g \in \LL (N)$ for all $g \in G$.
\end{lemma}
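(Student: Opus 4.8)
The plan is to exploit the fact that conjugation by $g \in G$ is an automorphism of $G$ that permutes the members of $\NN$ (since each $N \in \NN$ is normal, $N^g = N$), so it should "commute" with the compatibility conditions. The key observation is that $\alpha \mapsto \alpha^g$ is a permutation of $\irr N$ for each $N \in \NN$, and this permutation commutes with restriction to $G^0$: that is, $(\alpha^g)^0 = (\alpha^0)^g$, where the action of $g$ on $\pi$-partial characters is the obvious one (conjugation restricted to $N^0 = (N^0)^g$). Since $\Ipi N$ is stable under this $G$-conjugation action — because $\Ipi N$ is simply the set of irreducible $\pi$-partial characters and conjugation preserves irreducibility — we learn that $\alpha^g$ is again a lift of a member of $\Ipi N$, i.e.\ $\alpha^g$ is a $\pi$-lift in $\irr N$.

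First I would fix $N \in \NN$ and consider the set $\LL(N)^g = \{\alpha^g : \alpha \in \LL(N)\}$. By the remarks above, $\beta \mapsto \beta^0$ is still a bijection from $\LL(N)^g$ onto $\Ipi N$ (it is the composition of the bijection $\LL(N) \to \Ipi N$ with the conjugation bijections on both sides). So $\LL(N)^g$ satisfies condition (1) in the definition of a compatible set of $\pi$-lifts. Next I would check condition (2): if $M \le N$ with $M, N \in \NN$, and $\alpha \in \LL(N)$, then the constituents of $\alpha_M$ lie in $\LL(M)$; conjugating by $g$, the constituents of $(\alpha^g)_M = (\alpha_M)^g$ lie in $\LL(M)^g$. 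Hence $\{\LL(N)^g : N \in \NN\}$ is again a compatible set of $\pi$-lifts for $\NN$.

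Now the essential point is \emph{uniqueness}: for a fixed $N \in \NN$, the set $\LL(N)$ is forced. Indeed, if $\MM$ is any compatible set of $\pi$-lifts for $\NN$, then for each $N$, $\MM(N)$ is a set of lifts, one for each member of $\Ipi N$. But I claim $\LL(N)$ and $\LL(N)^g$ must in fact coincide. The cleanest route is to invoke the known fact (from Isaacs' work on $\pi$-partial characters, or from the structure of lifts) that within a fixed $\irr N$ there may be several lifts of a given $\varphi \in \Ipi N$, so uniqueness of $\LL(N)$ is \emph{not} automatic from condition (1) alone — this is exactly where condition (2) and an inductive argument on $\NN$ come in. I would argue by induction on $|N|$ (or on the position of $N$ in a chain): for $N$ minimal in $\NN$ there is a canonical situation forcing $\LL(N) = \LL(N)^g$, and for larger $N$, compatibility with the already-determined $\LL(M)$ for $M < N$ in $\NN$, together with the fact that $g$ fixes each $M$, pins $\LL(N)$ down. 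Concretely, once $\LL(M) = \LL(M)^g$ for all $M \in \NN$ with $M < N$, any $\alpha \in \LL(N)$ and its conjugate $\alpha^g$ are both lifts of $\pi$-partial characters whose restrictions down the chain agree up to the $g$-action, and since $g$ acts trivially on the relevant data below $N$, we get $\alpha^g \in \LL(N)$.

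The main obstacle I anticipate is precisely this uniqueness/rigidity step: showing that a compatible set of $\pi$-lifts is \emph{uniquely determined} on each $N \in \NN$ (so that $\LL(N)^g$, being a compatible set, must equal $\LL(N)$). If the definition of ``compatible set'' in this paper is strong enough that such a set is unique given $\NN$ — which the author likely intends, given the examples $\Bpi{\cdot}$, $\Dpi{\cdot}$, $\Bpi{\cdot:\NN}$, $\Bpi{\cdot:*\NN}$ are all canonical — then the lemma follows immediately from the two easy verifications above. If it is \emph{not} unique, the argument instead needs to extract, from the defining conditions (1) and (2), just enough to run the induction: the point being that conjugation by $g$ is an isomorphism of the whole ``compatibility structure'' that fixes $\NN$ pointwise, hence fixes anything the structure canonically determines. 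I would structure the write-up so that the bulk is the two bookkeeping checks (conditions (1) and (2) for $\LL(N)^g$), with the induction on $\NN$ handling the identification $\LL(N)^g = \LL(N)$.
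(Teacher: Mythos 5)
There is a genuine gap. Your two bookkeeping checks are fine: $\LL^g$ defined by $\LL^g(N) = \LL(N)^g$ is indeed again a compatible set of $\pi$-lifts for $\NN$, since conjugation fixes each $N \in \NN$, commutes with restriction to $\pi$-elements, and permutes $\Ipi N$. But the reduction you then rely on --- that a compatible set is uniquely determined on each $N \in \NN$, so that $\LL(N)^g$ must coincide with $\LL(N)$ --- is false, and you correctly flag it as the unresolved obstacle. Compatible sets of $\pi$-lifts are not unique: the paper itself lists several generally distinct examples ($\Bpi{\cdot}$, $\Dpi{\cdot}$ when $2\notin\pi$, $\Bpi{\cdot:\NN}$, $\Bpi{\cdot:*\NN}$), so no rigidity statement of this kind is available. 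Your fallback induction does not close the gap either: the base case (``for $N$ minimal there is a canonical situation forcing $\LL(N)=\LL(N)^g$'') is unsupported --- condition (1) alone never forces $g$-stability of a chosen set of lifts, which is exactly what is to be proved --- and the inductive step fails because knowing the constituents of $(\alpha^g)_M$ lie in $\LL(M)$ for all smaller $M \in \NN$ is only a necessary consequence of membership in $\LL(N)$, not a criterion for it; nothing in the definition lets you promote that information upward to conclude $\alpha^g \in \LL(N)$.

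The missing idea is to argue top-down rather than bottom-up, using the group $G$ itself as a member of $\NN$ (which the lemma tacitly requires; for a collection not containing $G$ the statement can fail). Pick $\phi \in \Ipi G$ with $\alpha^0$ a constituent of $\phi_N$, and let $\chi \in \LL(G)$ with $\chi^0 = \phi$. By condition (2), every irreducible constituent of $\chi_N$ lies in $\LL(N)$, and since restriction is a bijection from $\LL(N)$ to $\Ipi N$ while $\alpha^0$ is a constituent of $(\chi_N)^0 = \phi_N$, the character $\alpha$ itself must occur as a constituent of $\chi_N$. Now $N$ is normal in $G$, so $\chi_N$ is $G$-invariant and Clifford's theorem makes $\alpha^g$ a constituent of $\chi_N$ as well; applying condition (2) once more gives $\alpha^g \in \LL(N)$. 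It is this use of a lift of $\phi$ in $\LL(G)$ over $N$ --- tying $\alpha$ and $\alpha^g$ together inside a single $G$-invariant restriction --- that your proposal lacks.
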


\begin{proof}
We know that $\alpha^0 \in \Ipi N$. Let $\phi \in \Ipi G$ so that
$\alpha^0$ is a constituent of $\phi_N$. Take $\chi \in \LL (G)$ so
that $\chi^0 = \phi$.  Now, $\alpha^0$ is an irreducible constituent
of $\phi_N = (\chi^0)_N = (\chi_N)^0$. Since every irreducible
constituent of $\chi_N$ lies in $\LL (N)$ and $\alpha$ is the unique
element of $\LL (N)$ that is a $\pi$-lift of $\alpha^0$, we deduce
that $\alpha$ is a constituent of $\chi_N$. Also, $\alpha^g$ is a
constituent of $\chi_N$, so $\alpha^g \in \LL (N)$ for all $g \in
G$.
\end{proof}

One of the motivations for looking at compatible sets of lifts is
that if we have a $\pi$-partial character of a normal subgroup in
the collection, then its compatible lift will have the same
stabilizer in G.

\begin{corollary}
Let $\pi$ be a set of prime, let $G$ be a $\pi$-separable group, let
$\NN$ be a collection of normal subgroups of $G$, and let $\LL$ be a
compatible set of $\pi$-lifts for $\NN$.  If $N \in \NN$ and $\alpha
\in \LL (N)$, then $\stab G{\alpha} = \stab G{\alpha^0}$.
\end{corollary}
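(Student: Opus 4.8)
The plan is to prove the two containments $\stab G{\alpha} \subseteq \stab G{\alpha^0}$ and $\stab G{\alpha^0} \subseteq \stab G{\alpha}$ separately, and both will follow quickly from Lemma~\ref{conjugacy} together with the bijectivity built into the definition of a compatible set of $\pi$-lifts.

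First I would observe the easy inclusion. If $g \in \stab G{\alpha}$, then $\alpha^g = \alpha$, and restricting both sides to $N^0$ gives $(\alpha^0)^g = (\alpha^g)^0 = \alpha^0$, so $g \in \stab G{\alpha^0}$; this uses only that restriction to $\pi$-elements commutes with conjugation (since conjugation permutes $\pi$-elements) and that restriction is a genuine function, not that $\LL$ is compatible. Hence $\stab G{\alpha} \subseteq \stab G{\alpha^0}$ with no further hypotheses.

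For the reverse inclusion, suppose $g \in \stab G{\alpha^0}$, so $(\alpha^g)^0 = (\alpha^0)^g = \alpha^0$. By Lemma~\ref{conjugacy}, $\alpha^g \in \LL(N)$. Now both $\alpha$ and $\alpha^g$ lie in $\LL(N)$ and both are lifts of the same irreducible $\pi$-partial character $\alpha^0 \in \Ipi N$. But condition~(1) in the definition of a compatible set of $\pi$-lifts says that $\beta \mapsto \beta^0$ is a bijection from $\LL(N)$ to $\Ipi N$, so there is exactly one element of $\LL(N)$ restricting to $\alpha^0$. Therefore $\alpha^g = \alpha$, i.e.\ $g \in \stab G{\alpha}$. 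Combining the two inclusions gives $\stab G{\alpha} = \stab G{\alpha^0}$.

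There is essentially no obstacle here: the corollary is an immediate consequence of Lemma~\ref{conjugacy} and the injectivity of restriction on $\LL(N)$. The only point requiring a word of care is the interaction of conjugation with the operation $\beta \mapsto \beta^0$ — namely that $(\beta^g)^0 = (\beta^0)^g$ — but this is routine since $g$ normalizes $N$ and hence permutes $N^0 = N \cap G^0$, so evaluating $\beta^g$ on $N^0$ is the same as evaluating $\beta^0$ after the corresponding relabeling.
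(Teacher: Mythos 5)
Your argument is correct and follows the paper's own proof essentially verbatim: the easy inclusion $\stab G{\alpha} \le \stab G{\alpha^0}$ from compatibility of restriction with conjugation, then Lemma~\ref{conjugacy} plus the bijectivity of $\beta \mapsto \beta^0$ on $\LL(N)$ to force $\alpha^g = \alpha$ whenever $g$ stabilizes $\alpha^0$. Nothing further is needed.
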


\begin{proof}
It is obvious that if $g$ stabilizes $\alpha$, then it stabilizes
$\alpha^0$, so $\stab G{\alpha} \le \stab G{\alpha^0}$.  Suppose now
that $g$ stabilizes $\alpha^0$.  By Lemma \ref{conjugacy}, we have
that $\alpha^g \in \LL (N)$.  Now, $(\alpha^g)^0 = (\alpha^0)^g =
\alpha^0$.  Since restriction is a bijection from $\LL (N)$ to $\Ipi
N$, it follows that $\alpha^g = \alpha$.  We conclude that $\stab
G{\alpha} = \stab G{\alpha^0}$.
\end{proof}

We will now specialize to the case where $\NN$ is a normal
$\pi$-series.  At this time, we need to review the definition and
results regarding self-stabilizing pairs that appeared in
\cite{chains}. Let $\NN = \{ 1 = N_0 \le N_1 \le \cdots \le N_n = G
\}$ be a normal $\pi$-series for $G$. Fix $\chi \in \irr G$. A {\it
character tower} for $\chi$ with respect to $\NN$ is a set of
characters $\UU = \{ 1 = \nu_0, \nu_1, \dots, \nu_n = \chi \}$ where
each $\nu_i \in \irr {N_i}$ and $\nu_i$ is a constituent of
$(\nu_{i+1})_{N_i}$ for all $i = 0, \dots, n-1$.  We take $T$ to be
$G_\UU$, the stabilizer in $G$ of $\UU$.  We proved in \cite{chains}
that there is a unique character $\tau \in \irr T$ so that $\tau^G =
\chi$ and $\tau_{T \cap N_i} = t_i \tau_i$ for some character
$\tau_i \in \irr {T \cap N_i}$ with $(\tau_i)^{N_i} = \nu_i$.  We
say that $(T,\tau)$ is the {\it self-stabilizing pair} determined by
$\UU$.  It is proved in \cite{chains} that all the self-stabilizing
pairs determined by character towers for $\chi$ are conjugate in
$G$.  We also prove that $\tau$ must be $\pi$-factored. We defined
$\Bpi {G : \NN}$ to be the characters in $\irr G$ who have a
self-stabilizing pair $(T,\tau)$ where $\tau$ is $\pi$-special.

We continue to suppose that $\phi \in \Ipi G$.  We now show that if
$\psi \in \LL (G)$ with $\psi^0 = \phi$ and $\chi$ is any
$\NN$-$\pi$-lift of $\phi$, then the subgroup for a self-stabilizing
pair for $\chi$ is contained in the subgroup of a self-stabilizing
pair for $\psi$.

\begin{lemma}\label{commonchains}
Let $\pi$ be a set of primes, let $G$ be a $\pi$-separable group,
let $\NN$ be a normal $\pi$-series for $G$, and let $\LL$ be a
compatible set of $\pi$-lifts for $\NN$.  Assume $\phi$ is a
$\pi$-partial character of $G$, and fix $\psi \in \LL (G)$ so that
$\psi^0 = \phi$. Suppose $\chi$ is an $\NN$-$\pi$-lift of $\phi$.
Let $\TT = \{ 1 = \tau_0, \tau_1, \dots, \tau_n = \psi \}$ and $\UU
= \{ 1 = \nu_0, \nu_1, \dots, \nu_n = \chi \}$ be character towers
with respect to $\NN$ for $\psi$ and $\chi$ respectively so that
$\tau_i^0 = \nu_i^0$ for all $i$. Then $\stab G{\UU} \le \stab
G{\TT}$.
\end{lemma}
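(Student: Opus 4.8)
The plan is to prove the inclusion $\stab G{\UU} \le \stab G{\TT}$ by showing that every element $g$ fixing the tower $\UU$ must fix each character $\tau_i$ in the tower $\TT$, proceeding by induction on $i$. The base case $i = 0$ is trivial since $\tau_0 = 1$. So suppose $g \in \stab G{\UU}$ and that $g$ already fixes $\tau_0, \dots, \tau_{i-1}$; the goal is to deduce that $g$ fixes $\tau_i$.

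First I would use the hypothesis $\tau_i^0 = \nu_i^0$ together with the fact that $g$ fixes $\nu_i$ (because $g \in \stab G{\UU}$): applying $g$ to $\tau_i^0 = \nu_i^0$ gives $(\tau_i^g)^0 = (\tau_i^0)^g = (\nu_i^0)^g = \nu_i^0 = \tau_i^0$, so $\tau_i$ and $\tau_i^g$ are both $\pi$-lifts of the same irreducible $\pi$-partial character $\tau_i^0 \in \Ipi{N_i}$. Next I would argue that $\tau_i \in \LL(N_i)$. Since $\chi$ is an $\NN$-$\pi$-lift, the irreducible constituents of $\chi_{N_i}$ are $\pi$-lifts; and because $\psi \in \LL(G)$ with $\LL$ compatible, the constituents of $\psi_{N_i}$ lie in $\LL(N_i)$, while $\tau_i$ is one of those constituents (it sits in the tower $\TT$ for $\psi$). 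Now invoke Lemma~\ref{conjugacy}: since $\tau_i \in \LL(N_i)$ and $\LL$ is a compatible set of $\pi$-lifts, $\tau_i^g \in \LL(N_i)$ as well. But restriction is a bijection from $\LL(N_i)$ to $\Ipi{N_i}$, and $\tau_i^g$ and $\tau_i$ have equal restriction $\tau_i^0$, so $\tau_i^g = \tau_i$. This shows $g \in \stab G{\tau_i}$ for all $i$, hence $g$ fixes the whole tower $\TT$, giving $\stab G{\UU} \le \stab G{\TT}$.

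I expect the main obstacle to be the bookkeeping that justifies $\tau_i \in \LL(N_i)$: one must carefully check that the characters $\tau_i$ appearing in a character tower for $\psi$ really are irreducible constituents of $\psi_{N_i}$ (this is immediate from the definition of character tower, since $\tau_i$ is a constituent of $(\tau_{i+1})_{N_i}$ and, iterating upward, of $\psi_{N_i}$), and that compatibility of $\LL$ then forces $\tau_i \in \LL(N_i)$ — here it matters that each $N_i \in \NN$ and $G \in \NN$, so condition (2) in the definition of a compatible set applies with $M = N_i$, $N = G$. The induction hypothesis about $g$ fixing the earlier $\tau_j$'s is not actually needed for this argument as laid out above, since each step only uses that $g$ fixes $\nu_i$; but I would keep it in reserve in case an alternative route through the self-stabilizing structure (relating $\tau_i$ to $\nu_i$ via the towers more explicitly) turns out to be cleaner. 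The key external inputs are Lemma~\ref{conjugacy} and the bijectivity of restriction on $\LL(N_i)$, both already available.
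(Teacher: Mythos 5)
Your argument is correct and is essentially the paper's own proof: both show $(\tau_i^g)^0 = \tau_i^0$ from $g$ fixing $\nu_i$ and the hypothesis $\tau_i^0=\nu_i^0$, place $\tau_i$ (and hence $\tau_i^g$, via Lemma~\ref{conjugacy}) in $\LL(N_i)$ using compatibility of $\LL$ with $\psi\in\LL(G)$, and conclude $\tau_i^g=\tau_i$ from the injectivity of restriction on $\LL(N_i)$. The induction on $i$ you set up is, as you note, unnecessary, and the appeal to $\chi$ being an $\NN$-$\pi$-lift is likewise not needed; otherwise the routes coincide.
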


\begin{proof}
If $x \in G_{\UU}$, then $\nu_i^x = \nu_i$ for each $i$.  This
implies that $(\nu_i^x)^0 = \nu_i^0 = \tau_i^0$ for each $i$. On the
other hand, if $g \in G^0$, then $(\nu_i^x)^0 (g) = (\nu_i)
(g^{x^{-1}}) = (\tau_i) (g^{x^{-1}}) = (\tau_i^x)^0 (g)$, and this
implies that $\tau_i^0 = (\tau_i^x)^0$ for each $i$.  We now note
that $\tau_i, \tau_i^x \in \LL (N_i)$ using Lemma \ref{conjugacy}.
Since restriction to $G^0$ is a bijection from $\LL (N_i)$ to $\Ipi
{N_i}$, this implies that $\tau_i = \tau_i^x$ for all $i$. We
conclude that $x \in G_{\TT}$ as desired.
\end{proof}

We now show that if $\chi \in \LL (G)$ and $(T,\tau)$ is a
self-stabilizing pair for $\chi$ with respect to $\NN$, then there
is a $\pi$-special character $\sigma \in \irr T$ so that
$(T,\sigma)$ is a self-stabilizing pair for $\NN$ and $(\sigma^G)^0
= \chi^0$.

\begin{lemma} \label{degree}
Let $\pi$ be a set of primes, let $G$ be a $\pi$-separable group,
let $\NN$ be a normal $\pi$-series for $G$, and let $\LL$ be a
compatible set of $\pi$-lifts for $\NN$.  Suppose $\chi \in \LL (G)$
and $(T,\tau)$ is a self-stabilizing pair for $\chi$ with respect to
$\NN$.  Then $\tau (1)$ is a $\pi$-number.  Furthermore, there is a
character $\sigma \in \irr T$ so that $\sigma$ is $\pi$-special,
$(T,\sigma)$ is a self-stabilizing pair with respect to $\NN$, and
$(\sigma^G)^0 = \chi^0$.
\end{lemma}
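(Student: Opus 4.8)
The plan is to exploit Lemma \ref{commonchains} together with the structure theory of self-stabilizing pairs from \cite{chains}. Since $\chi \in \LL(G)$, we may take $\psi = \chi$ in Lemma \ref{commonchains}, applied to $\phi = \chi^0$; the content of that lemma in this special case is that any character tower for $\chi$ gives a self-stabilizing subgroup that is maximal among those arising from $\NN$-$\pi$-lifts of $\chi^0$. I would then pick a character tower $\UU = \{1 = \nu_0, \dots, \nu_n = \chi\}$ realizing $(T,\tau)$, and use the fact that $\chi \in \LL(G)$ is an $\NN$-$\pi$-lift of itself together with the compatibility of $\LL$ (Lemma \ref{conjugacy} and its corollary) to conclude that each $\nu_i \in \LL(N_i)$. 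The first assertion, that $\tau(1)$ is a $\pi$-number, should follow from the degree relation $\tau_{T \cap N_i} = t_i \tau_i$ with $(\tau_i)^{N_i} = \nu_i$ together with the fact that each $\nu_i$ is a $\pi$-lift, hence $\nu_i(1)$ divides a $\pi$-number by the standard fact that lifts of irreducible $\pi$-partial characters have $\pi$-degree (or more precisely, that the relevant ratios $\gpindex{N_i}{T \cap N_i}$ and the $t_i$ are $\pi$-numbers); one tracks this up the series inductively, using that $N_{i+1}/N_i$ is a $\pi$- or $\pi'$-group and that for the $\pi'$-sections the character must restrict compatibly with a lift, forcing the $\pi'$-part of the degree jump to be trivial.

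For the second assertion, I would write $\tau = \alpha\beta$ with $\alpha$ $\pi$-special and $\beta$ $\pi'$-special (this is the $\pi$-factorization of $\tau$ guaranteed in \cite{chains}). Since $\tau(1) = \alpha(1)\beta(1)$ is a $\pi$-number and $\beta(1)$ is a $\pi'$-number, we get $\beta(1) = 1$, so $\beta$ is linear. Set $\sigma = \alpha$. The key points to verify are: (a) $(T,\sigma)$ is again a self-stabilizing pair with respect to $\NN$ — this should follow because replacing $\tau$ by its $\pi$-special factor does not change the stabilizer data, i.e. $\alpha$ lies over the same $\pi$-special factors $(\tau_i)_\pi$ of the tower characters and $T$ is still the stabilizer of the associated (adjusted) tower $\{(\nu_i)_\pi$ or the appropriate $\pi$-special pieces$\}$; and (b) $(\sigma^G)^0 = \chi^0$. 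For (b), the natural approach is to compare $\sigma^G = \alpha^G$ with $\tau^G = \chi$ on $\pi$-elements: since $\beta$ is linear $\pi'$-special, $\beta^0 = 1_{T}^0$ on $(T)^0$, so $\tau^0 = (\alpha\beta)^0 = \alpha^0 \beta^0 = \alpha^0 = \sigma^0$ on $T^0$, and induction commutes with restriction to $\pi$-elements in the appropriate sense (the formula $(\theta^G)^0 = (\theta^0)^G$ for the $\pi$-partial character induced from a subgroup, as in \cite{pipart}), giving $(\sigma^G)^0 = (\sigma^0)^G = (\tau^0)^G = (\tau^G)^0 = \chi^0$.

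The main obstacle I anticipate is step (a): showing that the $\pi$-special factor $\sigma = \alpha$ genuinely sits in a self-stabilizing pair for $\NN$, not merely that it is $\pi$-special with the right induced degree. This requires knowing that one can build a character tower $\{1 = \sigma_0, \dots, \sigma_n\}$ with $\sigma_n^G$-constituent data matching $\sigma^G$, whose stabilizer is exactly $T$, and that $\sigma$ is the distinguished character of that pair. I would handle this by invoking the uniqueness and functoriality of the $\pi$-special part of a $\pi$-factored character under restriction to normal subgroups (the $\pi$-special factor of $\nu_i$ restricts to the $\pi$-special factor of $\nu_{i-1}$ up to the relevant constituent relation), so that the $\pi$-special factors $(\nu_i)_\pi$ form a legitimate character tower; its stabilizer contains $T$, and it cannot be larger since any element stabilizing the $\pi$-special factors while lying in $\stab{G}{\chi^0} = \stab{G}{\UU}$-data must stabilize the whole tower — here one uses that the $\pi'$-special factors along the tower are linear, hence determined, so stabilizing the $\pi$-special parts is equivalent to stabilizing the full characters. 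Once the tower is in place, \cite{chains} supplies the distinguished $\pi$-special $\sigma \in \irr T$ with $\sigma^G$ the corresponding character, and a degree/uniqueness count identifies $\sigma = \alpha$.
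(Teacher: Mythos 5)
There is a genuine gap, and it sits exactly where the real content of the lemma lies. Your argument for the first assertion leans on the ``standard fact that lifts of irreducible $\pi$-partial characters have $\pi$-degree,'' which is false: characters in $\Ipi G$ can have degree divisible by primes in $\pi'$ (already for $G=S_3$, $\pi=\{3\}$, the degree-$2$ character restricts irreducibly to the $3$-elements), so the $\nu_i$ being $\pi$-lifts gives no control on the $\pi'$-part of their degrees. Your fallback — that the multiplicities $t_i$ and the indices $\gpindex{N_i}{T\cap N_i}$ are $\pi$-numbers — is precisely what has to be proved, and the inductive ``degree-tracking'' over $\pi$- and $\pi'$-sections is not an argument for it; nothing in the self-stabilizing-pair structure alone forces the $\pi'$-special factor $\beta$ of $\tau$ to be linear. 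The paper gets this by a quite different mechanism: since restriction is a bijection from $\Bpi{G:\NN}$ to $\Ipi G$, there is $\psi\in\Bpi{G:\NN}$ with $\psi^0=\chi^0$; the constituents of $\psi_{N_i}$ lie in $\Bpi{N_i:\NN_i}$, so one can build a tower for $\psi$ whose restricted data matches that of your tower for $\chi$, with self-stabilizing pair $(S,\sigma)$, $\sigma$ $\pi$-special. Lemma \ref{commonchains} is then applied \emph{twice}, with the roles of the two characters exchanged — once with $\LL$ as the compatible set (giving $S\le T$) and once with $\Bpi{\cdot:\NN}$ as the compatible set (giving $T\le S$) — so $S=T$, and comparing $\chi(1)=\psi(1)$ yields $\tau(1)=\sigma(1)$, a $\pi$-number. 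You only invoke Lemma \ref{commonchains} in the degenerate form $\psi=\chi$, which never brings in the canonical lift and cannot produce this conclusion.

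The ``furthermore'' part of your proposal has the same problem in a different guise. You take $\sigma=\alpha$, the $\pi$-special factor of $\tau$, and must then show that $(T,\alpha)$ is a self-stabilizing pair — including that $\alpha^G$ is irreducible — which you acknowledge as the main obstacle and only sketch. The sketch itself is shaky: the tower characters $\nu_i\in\irr{N_i}$ need not be $\pi$-factored (only $\tau$ and the $\tau_i$ are), so ``the $\pi$-special factor of $\nu_i$'' is not defined in general, and the claim that the $\pi'$-special factors along the tower are linear presupposes the first assertion, which you have not yet established — a circularity. The paper sidesteps all of this: the character $\sigma$ it produces is not the factor of $\tau$ but the self-stabilizing character of the canonical lift $\psi$, for which $(T,\sigma)$ is self-stabilizing by construction and $(\sigma^G)^0=\psi^0=\chi^0$ is immediate. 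Your computation $(\alpha^G)^0=(\alpha^0)^G=(\tau^0)^G=(\tau^G)^0=\chi^0$ is fine once $\beta$ is known to be linear, but it does not rescue step (a), so as it stands the proposal does not prove the lemma.
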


\begin{proof}
Label $\NN = \{ 1 = N_0 \le N_1 \le \cdots \le N_n = G \}$, and let
$\TT = \{ 1 = \theta_0, \theta_1, \dots, \theta_n = \chi \}$ be the
character tower under $\chi$ that yields $(T,\tau)$.  We know that
$\theta_i \in \LL (N_i)$ for each $i$, and so, $\phi_i = \theta_i^0
\in \Ipi {N_i}$.  Recall that $T = \stab G{\TT}$.

We showed in \cite{chains} that restriction is a bijection from
$\Bpi {G : \NN}$ to $\Ipi G$.  Since $\chi^0 \in \Ipi G$, there is a
character $\psi \in \Bpi {G \mid \NN}$ so that $\psi^0 = \chi^0$.
Let $\NN_i = \{ N_0 \le N_1 \le \dots \le N_i \}$. Also, it was
proved in Theorem 5.2 of \cite{chains} that all the irreducible
constituents of $\psi_{N_i}$ lie in $\Bpi {N_i : \NN_i}$.  Let
$\gamma_i$ be the unique character in $\Bpi {N_i : \NN_i}$ such that
$\gamma_i^0 = \phi_i$. Since $\phi_i$ is a constituent of
$(\psi^0)_{N_i} = (\chi^0)_{N_i}$, it follows that $\gamma_i$ is a
constituent of $\psi_{N_i}$.  In fact, if $j \ge i$, then $\gamma_i$
will be a constituent of $(\gamma_j)_{N_i}$.  We conclude that $\UU
= \{ \gamma_0, \gamma_1, \dots, \gamma_n \}$ is a character tower
for $\NN$ under $\psi$.  Let $(S,\sigma)$ be the self-stabilizing
pair determined by this character tower, and recall that $S = \stab
G{\UU}$.  Since $\chi \in \LL (G)$, we have $S \le T$ by Lemma
\ref{commonchains}.  On the other hand, since $\Bpi {\cdot : \NN}$
is a compatible set of $\pi$-lifts for $\NN$, we obtain $T \le S$ by
Lemma \ref{commonchains}.  We conclude that $S = T$. We now have
$|G:T| \tau (1) = \chi (1) = \psi (1) = |G:T| \sigma (1)$, and $\tau
(1) = \sigma (1)$. Since $\psi \in \Bpi {G : \NN}$, we know that
$\sigma$ is $\pi$-special, and therefore, $\sigma (1)$ is a
$\pi$-number.
\end{proof}

For our characterization, we need one further definition.  The
character pair $(V,\gamma)$ is called {\it inductive for $\NN$} if
for each $N \in \NN$, we have $\gamma_{V \cap N} = a \eta$ for some
positive integer $a$ and character $\eta \in \irr {V \cap N}$ where
$(\eta^N)^0$ is irreducible.  This gives one implication of Theorem
\ref{main1}.

\begin{lemma}\label{inductive}
Let $\pi$ be a set of primes, let $G$ be a $\pi$-separable group,
and let $\NN$ be a normal $\pi$-series for $G$.  Let $(V,\gamma)$ be
inductive for $\NN$.  Then $\gamma^G \in \irr G$ is an
$\NN$-$\pi$-lift and $\gamma$ is $\pi$-factored.
\end{lemma}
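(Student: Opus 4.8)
The plan is to prove the statement by induction on the length $n$ of the normal $\pi$-series $\NN = \{1 = N_0 \le N_1 \le \cdots \le N_n = G\}$. The base case $n = 0$ (so $G = 1$) is trivial. For the inductive step, write $M = N_{n-1}$, so $M \trianglelefteq G$ and $G/M$ is either a $\pi$-group or a $\pi'$-group, and let $\NN' = \{N_0 \le \cdots \le N_{n-1}\}$ be the induced normal $\pi$-series of $M$. The first task is to restrict the hypothesis: if $(V,\gamma)$ is inductive for $\NN$, I claim $(V \cap M, \eta_{n-1})$ is inductive for $\NN'$, where $\gamma_{V \cap M} = a\,\eta_{n-1}$ is the (irreducible, up to the multiplicity $a$) restriction guaranteed by the definition applied to $N = M$. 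Indeed, for each $N \in \NN'$ we have $N \le M$, so $(V \cap M) \cap N = V \cap N$, and since $\gamma_{V \cap N} = a'\,\eta$ with $(\eta^N)^0$ irreducible (inductivity of $(V,\gamma)$ at $N$), and $\gamma_{V\cap M}$ restricted to $V\cap N$ equals $a\,(\eta_{n-1})_{V\cap N}$, the character $\eta_{n-1}$ restricted to $V \cap N$ is a multiple of $\eta$ — here I should be slightly careful that $\eta_{n-1}$ is irreducible so that "$\gamma_{V\cap M}$ is a multiple of $\eta_{n-1}$" forces $(\eta_{n-1})_{V\cap N}$ to be a multiple of a single irreducible — giving inductivity of $(V\cap M, \eta_{n-1})$ at $N$. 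By induction, $\eta_{n-1}^M \in \irr M$ is an $\NN'$-$\pi$-lift and $\eta_{n-1}$ is $\pi$-factored.

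Next I would show $\gamma^G \in \irr G$. Since $(V,\gamma)$ arises as (or behaves like) a self-stabilizing-type pair, one expects $\gamma^G$ irreducible, but to be safe I would argue it directly from the inductive pair structure: inductivity at $N = G$ says $\gamma^G = (\gamma)^G$ already has $(\gamma^G)^0$ irreducible — wait, more precisely the definition at $N=G$ gives $\gamma_{V} = a\eta$ with $\eta = \gamma$, $a=1$, and $(\gamma^G)^0$ irreducible, hence in particular $\gamma^G$ is a genuine character whose $\pi$-partial character is irreducible; combined with the fact (to be extracted) that $\gamma^G$ is itself irreducible — this should follow because $(\gamma^G)^0$ irreducible forces $\gamma^G$ to have a unique irreducible constituent appearing with multiplicity one after taking $0$, and by a standard argument about lifts this means $\gamma^G$ is irreducible. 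I would state this irreducibility as coming from the inductivity hypothesis at the top term, possibly invoking that $\gamma$ being $\pi$-factored (proved below) plus $(\gamma^G)^0 \in \Ipi G$ forces $\gamma^G$ irreducible via the injectivity of $\chi \mapsto \chi^0$ on appropriate lift sets.

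Then I would verify the $\NN$-$\pi$-lift property. Set $\chi = \gamma^G$. For $N = G$ we already have $\chi^0 \in \Ipi G$, so $\chi$ is a $\pi$-lift; this handles the top of the series. For $N = M = N_{n-1}$: by Mackey, $\chi_M = (\gamma^G)_M$ decomposes via double cosets $V\backslash G / M$, and since $M \trianglelefteq G$ each piece is $(\gamma^x)_{V^x \cap M}$ induced to $M$; using $\gamma_{V\cap M} = a\,\eta_{n-1}$ and conjugating, each constituent of $\chi_M$ is a constituent of some $(\eta_{n-1}^x)^M = (\eta_{n-1}^M)^x$, which by the inductive hypothesis (and Lemma \ref{conjugacy}-type reasoning, or directly since conjugates of $\NN'$-$\pi$-lifts are $\NN'$-$\pi$-lifts, hence in particular $\pi$-lifts) is a $\pi$-lift. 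For $N = N_i$ with $i < n-1$: the irreducible constituents of $\chi_{N_i}$ are among the irreducible constituents of $(\mu)_{N_i}$ where $\mu$ ranges over the irreducible constituents of $\chi_M$; since each such $\mu$ is a conjugate of $\eta_{n-1}^M$, an $\NN'$-$\pi$-lift, its restriction to $N_i \in \NN'$ has constituents that are $\pi$-lifts. Thus all irreducible constituents of $\chi_N$ are $\pi$-lifts for every $N \in \NN$, i.e., $\chi$ is an $\NN$-$\pi$-lift.

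Finally, $\gamma$ is $\pi$-factored: I claim $\gamma = \gamma_\pi \gamma_{\pi'}$ where $\gamma_\pi$ is $\pi$-special and $\gamma_{\pi'}$ is $\pi'$-special. The inductive hypothesis gives $\eta_{n-1}$ $\pi$-factored on $V \cap M$. One then analyzes $\gamma$ over $\eta_{n-1}$: since $(V \cap M)$ has index in $V$ equal to $|VM/M|$, which divides $|G/M|$ — a $\pi$-number or a $\pi'$-number — the extension from $\eta_{n-1}$ to $\gamma$ lives entirely on one side of the $\pi$/$\pi'$ factorization, and using that $(\gamma^G)^0$ is irreducible together with degree/order considerations (the Fong–Swan/lift theory already invoked in Lemma \ref{degree}) one concludes $\gamma$ is $\pi$-factored. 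Here I expect to lean on the uniqueness of factored characters over a given $\pi$-factored character of a normal subgroup of $\pi$- or $\pi'$-index.

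The main obstacle I anticipate is the Mackey bookkeeping in the $\NN$-$\pi$-lift step — specifically, making sure that "constituent of a conjugate of an $\NN'$-$\pi$-lift" genuinely gives a $\pi$-lift at every level $N_i$, which requires knowing that $\NN'$-$\pi$-lifts are closed under $G$-conjugation (true because $\NN'$ is $G$-invariant as a set and conjugation permutes $\irr{N_i}$ compatibly with the $0$ operation) — and the $\pi$-factored conclusion, where the delicate point is propagating the factorization from $V \cap M$ up to $V$ across a possibly non-normal inclusion, which I would handle by choosing the character tower / self-stabilizing structure carefully so that the relevant index is a prime-power-type number on the correct side.
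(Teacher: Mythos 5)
Your outline for the lift part is essentially correct, but it is considerably more roundabout than what is needed, and the one genuinely delicate claim is left unproved. For the $\NN$-$\pi$-lift statement no induction on the length of the series is required: fix any $N \in \NN$ and write $\gamma_{V \cap N} = a\eta$ as in the definition. The hypothesis that $(\eta^N)^0$ is irreducible already forces $\eta^N \in \irr N$ (and likewise, taking $N = G$, forces $\gamma^G \in \irr G$), since any proper decomposition of the character would give a proper decomposition of its restriction to the $\pi$-elements; your detour through ``injectivity of $\chi \mapsto \chi^0$ on appropriate lift sets'' is not needed and is not really the right mechanism. Then the trivial double coset in Mackey shows $a\eta^N$ is a summand of $(\gamma^G)_N$, so $\eta^N$ is an irreducible constituent of $(\gamma^G)_N$, and Clifford's theorem says every irreducible constituent of $(\gamma^G)_N$ is a $G$-conjugate of $\eta^N$, hence a $\pi$-lift because conjugation commutes with restriction to $G^0$. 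This is exactly the paper's argument, carried out at each $N$ separately; your reduction to $M = N_{n-1}$ and the recursion through the pair $(V \cap M, \eta_{n-1})$ (which is indeed inductive for $\NN'$, as you verify) is valid but buys nothing.

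The genuine gap is the claim that $\gamma$ is $\pi$-factored. This is a statement purely about the chain $V \cap N_0 \le V \cap N_1 \le \cdots \le V \cap N_n = V$, which is a normal series of $V$ whose factors are $\pi$- or $\pi'$-groups and along which $\gamma$ restricts homogeneously; the irreducibility of $(\gamma^G)^0$, Fong--Swan type considerations, and ``degree/order considerations'' play no role here, so the ingredients you propose to lean on would not deliver the conclusion. What is actually needed is the standard step result: if $W \trianglelefteq U$ with $U/W$ a $\pi$-group or a $\pi'$-group, $\theta \in \irr W$ is $\pi$-factored, and $\chi \in \irr U$ satisfies $\chi_W = e\theta$, then $\chi$ is $\pi$-factored; iterating this up the chain is precisely what the paper invokes by citing Theorem 21.7 of \cite{MaWo}, disposing of the whole issue in one line. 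Appealing only to ``uniqueness of factored characters over a given $\pi$-factored character'' is weaker than this: uniqueness of a factorization does not by itself produce a factorization of $\gamma$. If you import the Manz--Wolf result your proof closes, but then it applies directly to the whole chain and the induction on $n$ becomes superfluous for this part as well.
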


\begin{proof}
Since $G \in \NN$, we have $(\gamma^G)^0$ is irreducible, and thus,
$\gamma^G \in \irr G$ will be a $\pi$-lift.  For each $N \in \NN$,
we have that $\gamma_{V \cap N} = a\eta$ for a positive integer $a$
and character $\eta \in \irr {V \cap N}$. We know that $(\eta^N)^0$
is irreducible. Notice that $\eta^N$ is an irreducible constituent
of $\chi_N$. Since the irreducible constituents of $\chi_N$ are all
conjugate, this implies that the irreducible constituents of
$\chi_N$ are $\pi$-lifts, and hence, $\chi$ is an $\NN$-$\pi$-lift.
The fact that $\gamma$ is $\pi$-factored follows from Theorem 21.7
of \cite{MaWo}.
\end{proof}

The next lemma is useful for identifying pairs that are inductive
for $\NN$.  This gives another implication of Theorem \ref{main1}.

\begin{lemma}\label{factored}
Let $\pi$ be a set of primes, let $G$ be a $\pi$-separable group,
and let $\NN$ be a normal $\pi$-series for $G$.  Let $(V,\gamma)$ be
a pair so that $\gamma$ factors as $\alpha\beta$ where $\alpha$ is
$\pi$-special and $\beta$ is $\pi'$-special.  If $\beta$ is linear,
then $(V,\alpha)$ is inductive for $\NN$ if and only if $(V,\gamma)$
is inductive for $\NN$.
\end{lemma}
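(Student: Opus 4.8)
The plan is to use the elementary observation that a linear character of $\pi'$-order restricts trivially to the set of $\pi$-elements, so that twisting by such a character does not affect any construction that only sees $\pi$-elements. Fix $N \in \NN$ and put $\lambda = \beta_{V \cap N}$. Since $\beta$ is $\pi'$-special and linear, $\beta = \det\beta$ has $\pi'$-order, and hence $\lambda$ is a linear character of $V \cap N$ whose order is a $\pi'$-number. Consequently, for any $\pi$-element $x$ of $V \cap N$ the value $\lambda(x)$ is a root of unity whose order divides both $o(x)$ and $o(\lambda)$, the first a $\pi$-number and the second a $\pi'$-number; thus $\lambda(x) = 1$, i.e.\ $\lambda^0$ is the trivial $\pi$-partial character of $V \cap N$. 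Likewise $\overline{\lambda} = (\overline{\beta})_{V \cap N}$ is linear of $\pi'$-order with the same property.

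The key step is the identity $\bigl((\eta\lambda)^N\bigr)^0 = (\eta^N)^0$ for every $\eta \in \irr{V \cap N}$. To see this, evaluate the induction formula at a $\pi$-element $g$ of $N$: every summand is a value $\eta(x^{-1}gx)\lambda(x^{-1}gx)$ taken over those $x \in N$ with $x^{-1}gx \in V \cap N$, and for such $x$ the element $x^{-1}gx$ is a $\pi$-element of $V \cap N$, so $\lambda(x^{-1}gx) = 1$ by the previous paragraph. Hence $(\eta\lambda)^N$ and $\eta^N$ agree on $N^0$, which is exactly the asserted equality of $\pi$-partial characters. In particular, $\bigl((\eta\lambda)^N\bigr)^0$ is irreducible if and only if $(\eta^N)^0$ is.

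Now I would derive both implications from this. Because $\gamma = \alpha\beta$ and $\beta_{V \cap N} = \lambda$ is linear, $\gamma_{V \cap N} = \alpha_{V \cap N}\lambda$; since the product of an irreducible character with a linear one is again irreducible, $\alpha_{V \cap N} = a\eta$ with $a$ a positive integer and $\eta \in \irr{V \cap N}$ exactly when $\gamma_{V \cap N} = a(\eta\lambda)$ with $\eta\lambda \in \irr{V \cap N}$, and this correspondence is reversible by multiplication by $\overline{\lambda}$. Combining this with the displayed identity (applied to $\eta$, or to $\eta\lambda$ together with $\overline{\lambda}$ for the reverse direction), the inductive condition for $(V,\alpha)$ at $N$ holds if and only if the inductive condition for $(V,\gamma)$ at $N$ holds. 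Since $N \in \NN$ was arbitrary, $(V,\alpha)$ is inductive for $\NN$ if and only if $(V,\gamma)$ is.

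The only point requiring care is the bookkeeping around induction versus restriction to $\pi$-elements, together with the triviality of linear $\pi'$-order characters on $\pi$-elements; both are routine, but I would state them explicitly and make sure the reverse direction genuinely uses $\overline{\lambda}$ (again linear of $\pi'$-order), so that the argument is symmetric. I do not anticipate any real obstacle beyond this.
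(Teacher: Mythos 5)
Your proof is correct and follows essentially the same route as the paper: restrict to $V \cap N$, observe that $\beta$ restricts to a linear character that is trivial on $\pi$-elements, and conclude that twisting by it changes neither the homogeneity of the restriction nor $(\eta^N)^0$. The only difference is that you spell out explicitly (via the induction formula) the step $((\eta\lambda)^N)^0 = (\eta^N)^0$, which the paper asserts directly from $(\eta\lambda)^0 = \eta^0$; this is a welcome but inessential elaboration.
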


\begin{proof}
Suppose first that $(V,\gamma)$ is inductive for $N$.  For each $N
\in \NN$, it follows that $\gamma_{V \cap N} = (\alpha\beta)_{V \cap
N}$ is homogeneous. We deduce that $\alpha_{V \cap N} = e \eta$ for
some $\pi$-special character $\eta$ of $V \cap N$ and positive
integer $e$.  Since $\beta$ is linear and $\pi'$-special, we have
$\beta_{V \cap N} = \xi$ where $\xi$ is linear and $\pi'$-special.
Hence, we have $(\alpha\beta)_{V \cap N} = e \eta \xi$. Because
$(V,\gamma)$ is inductive for $\NN$, we know that $((\eta\xi)^N)^0$
is irreducible. Also, $(\eta\xi)^0 = \eta^0$ as $\xi$ is linear and
$\pi'$-special, so $(\eta^N)^0 = ((\eta\xi)^N)^0$ is irreducible. We
conclude that $(V,\alpha)$ is inductive for $\NN$ as desired.

We now suppose that $(V,\alpha)$ is inductive for $\NN$.  Thus, for
$N \in \NN$, we have that $\alpha_{V \cap N} = a \eta$ for a
positive integer $a$ and character $\eta \in \irr {V \cap N}$ and
$(\eta^N)^0$ is irreducible. Since $\beta$ is linear and
$\pi'$-special, $\beta_N = \xi$ is irreducible and $\pi'$-special.
It follows that $\gamma_{V \cap N} = a \eta \xi$ and $(\eta \xi)^0 =
\eta^0$.  We deduce that $((\eta\xi)^N)^0 = (\eta^N)^0$ is
irreducible.  This implies that $(V,\gamma)$ is inductive.
\end{proof}

We have shown that if $\chi$ is an induced from a pair that is
inductive with respect to $\NN$, then $\chi$ is an $\NN$-$\pi$-lift.
We now show that if $\chi$ is an $\NN$-$\pi$-lift, then such a pair
must exist.  We now prove Theorem \ref{main1} which shows that
$\chi$ is an $\NN$-$\pi$-lift if and only if a self-stabilizing pair
$(V,\gamma)$ for $\chi$ with respect to $\NN$ is inductive for
$\NN$.  The more interesting portion of this theorem considers the
$\pi$-factorization of $\gamma$.  In this case, we show that the
$\pi'$-special factor of $\gamma$ must be linear, and the
$\pi$-special factor will afford an inductive pair for $\NN$.


\begin{proof} [Proof of Theorem \ref{main1}]
Suppose first that $\chi$ is an $\NN$-$\pi$-lift.  We show that
$(V,\alpha)$ is inductive.  Write $\NN = \{ 1= N_0, N_1, \dots,
N_n=G \}$, and let $\UU = \{ 1= \nu_0, \nu_1, \dots, \nu_n = \chi
\}$ be the character tower with respect to $\NN$ affording
$(V,\alpha\beta)$. We can find $\mu_i \in \Bpi {N_i}$ so that
$\mu_i^0 = \nu_i^0$.  Set $\psi = \mu_n$ and $\TT = \{ \mu_0, \mu_1,
\dots, \mu_n = \psi \}$. Notice that $\psi^0 = \chi^0$, and $\TT$ is
character tower for $\psi$ with respect to $\NN$.  Let $(T,\tau)$ be
the self-stabilizing pair afforded by $\TT$.  By Lemma
\ref{commonchains}, we see that $V \le T$, and by Lemma
\ref{degree}, we know that $\tau (1)$ is a $\pi$-number.  We have
$\chi (1) = |G:V| \alpha (1) \beta (1)$ and $\psi (1) = |G:T| \tau
(1)$.  Since $\chi (1) = \psi (1)$, this implies that $|G:V| \alpha
(1) \beta (1) = |G:T| \tau (1)$, and hence, $|T:V| \alpha (1)\beta
(1) = \tau (1)$.  Now, $\beta (1)$ divides the $\pi$-number $\tau
(1)$.  On the other hand, $\beta$ is $\pi'$-special, so $\beta (1)$
is a $\pi'$-number.  Together, these imply that $\beta (1) = 1$, and
so, $\beta$ is linear.

For each $N \in \NN$, since $(V,\alpha\beta)$ is a self-stabilizing
pair, it follows that $(\alpha\beta)_{V \cap N}$ is homogeneous.  We
deduce that $\alpha_{V \cap N} = e \eta$ for some $\pi$-special
character $\eta$ of $V \cap N$ and positive integer $e$.  Since
$\beta$ is linear, we have $\beta_{V \cap N} = \xi$ where $\xi$ is
linear.  Hence, we have $(\alpha\beta)_{V \cap N} = e \eta \xi$.
Because $(V,\alpha\beta)$ is self-stabilizing for $\chi$, we know
that $\nu = (\eta\xi)^N$ is an irreducible constituent of $\chi_N$.
Since $\chi$ is an $\NN$-$\pi$-lift, this implies that $\nu^0$ is
irreducible.  Also, $(\eta\xi)^0 = \eta^0$ as $\xi$ is linear and
$\pi'$-special, so $(\eta^N)^0 = ((\eta\xi)^N)^0 = \nu^0$ is
irreducible.  We conclude that $(V,\alpha)$ is inductive for $\NN$
as desired.

If $(V,\alpha)$ is inductive for $\NN$ and $\beta$ is linear, then
Lemma \ref{factored} implies that $(V,\gamma)$ is inductive for
$\NN$.  Finally, if $(V,\gamma)$ is inductive for $\NN$, then Lemma
\ref{inductive} implies that $\chi$ is an $\NN$-$\pi$-lift.
\end{proof}

\section{Counting $\NN$-lifts}

We now turn our attention to the lower bound on the number of
$\NN$-$\pi$-lifts which is proved in Theorem \ref{main2}.  To do
this, we need to gather more results regarding inductive pairs.  We
now show that if we have a pair that is inductive, then the subgroup
of that pair must be contained in the subgroup for some
self-stabilizing pair.

\begin{lemma}\label{containment}
Let $\pi$ be a set of primes, let $G$ be a $\pi$-separable group,
and let $\NN$ be a normal $\pi$-series for $G$.  Let $(V,\gamma)$ be
inductive for $\NN$, and write $\chi = \gamma^G \in \irr G$.  Then
$\chi$ has a self-stabilizing pair $(U,\delta)$ with respect to
$\NN$ so that $V \le U$.
\end{lemma}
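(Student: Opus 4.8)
The plan is to build a character tower for $\chi = \gamma^G$ that ``passes through'' the data of the inductive pair $(V,\gamma)$, and then take $(U,\delta)$ to be the self-stabilizing pair determined by that tower. Write $\NN = \{1 = N_0 \le N_1 \le \cdots \le N_n = G\}$. For each $i$, the inductive hypothesis gives $\gamma_{V \cap N_i} = a_i \eta_i$ for a positive integer $a_i$ and $\eta_i \in \irr{V \cap N_i}$ with $(\eta_i^{N_i})^0$ irreducible; set $\nu_i = \eta_i^{N_i} \in \irr{N_i}$. First I would check that $\UU = \{1 = \nu_0, \nu_1, \dots, \nu_n = \chi\}$ is genuinely a character tower for $\chi$ with respect to $\NN$, i.e.\ that $\nu_i$ is an irreducible constituent of $(\nu_{i+1})_{N_i}$. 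This is the technical heart of the argument: one wants to know that the homogeneous pieces $\eta_i$ of $\gamma$ restricted to the groups $V \cap N_i$ are themselves compatible under restriction, so that inducing them up to the $N_i$ respects the subgroup inclusions. I expect this to follow from Mackey/transitivity of induction together with the fact that $\gamma$ being inductive forces $(\gamma)_{V \cap N}$ to be homogeneous for \emph{every} $N \in \NN$ — in particular for $N = N_i$ and $N = N_{i+1}$ simultaneously — so $\eta_i$ is the unique (up to the scalar $a_i$) constituent of $\gamma_{V \cap N_i}$ and hence of $(\eta_{i+1})_{V \cap N_i}$; inducing to $N_i$ then shows $\nu_i$ is a constituent of $(\nu_{i+1})_{N_i}$ after checking the relevant induced characters are irreducible, which is exactly where $(\eta^N)^0$ irreducible (hence $\eta^N$ irreducible) is used.

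Once $\UU$ is established as a character tower for $\chi$, I would let $(U,\delta)$ be the self-stabilizing pair it determines, so $U = \stab G\UU$, $\delta^G = \chi$, and $\delta_{U \cap N_i} = c_i \delta_i$ with $(\delta_i)^{N_i} = \nu_i$ for each $i$, as guaranteed by the construction recalled from \cite{chains}. It remains to show $V \le U$, i.e.\ that every $v \in V$ fixes every $\nu_i$. But $v$ normalizes $V \cap N_i$ and fixes $\gamma$, hence permutes the homogeneous components of $\gamma_{V \cap N_i}$; since that restriction is homogeneous with unique constituent $\eta_i$ up to isomorphism, $v$ fixes the isomorphism class of $\eta_i$, so $\eta_i^v \cong \eta_i$, and therefore $\nu_i^v = (\eta_i^v)^{N_i} \cong \eta_i^{N_i} = \nu_i$. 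Thus $v \in \stab G\UU = U$, giving $V \le U$ as required.

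The main obstacle is the first step — verifying that the induced characters $\nu_i = \eta_i^{N_i}$ form a legitimate character tower, in particular that each $\nu_i$ really is irreducible and really does sit under $\nu_{i+1}$. The irreducibility of $\eta_i^{N_i}$ is not immediate from irreducibility of $(\eta_i^{N_i})^0$ in general, but here it is: a lift of an irreducible $\pi$-partial character is automatically irreducible as an ordinary character, since $(\eta^N)^0$ irreducible forces $\eta^N$ to have no proper sum decomposition. For the ``sitting under'' relation I would argue that $\langle (\nu_{i+1})_{N_i}, \nu_i \rangle \ne 0$ by Frobenius reciprocity applied through the chain $V \cap N_i \le V \cap N_{i+1} \le N_{i+1}$ and $V \cap N_i \le N_i \le N_{i+1}$, using the homogeneity of $\gamma_{V\cap N_{i+1}}$ to control the restriction $(\eta_{i+1})_{V \cap N_i}$. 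Everything else is bookkeeping with the self-stabilizing pair machinery already in place.
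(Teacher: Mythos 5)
Your proposal is correct and follows essentially the same route as the paper: induce the homogeneous constituents $\eta_i$ of $\gamma_{V\cap N_i}$ to get a character tower $\{\nu_i\}$ for $\chi$, take the self-stabilizing pair $(U,\delta)$ it determines, and observe that $V$ fixes each $\eta_i$ (hence each $\nu_i$), so $V \le U$. The only difference is that you spell out the tower verification and the irreducibility of $\eta_i^{N_i}$, which the paper dismisses as ``not difficult to see''; your sketch of those details is sound.
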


\begin{proof}
Write $\NN = \{ 1 = N_0 \le N_1 \le \dots \le N_n = G \}$.  For each
$i$, we have $\gamma_{N_i} = a_1 \eta_i$ for some positive integer
$a_i$ and character $\eta_i \in \irr {V \cap N_i}$. Also, we have
$\nu_i = (\eta_i)^{N_i}$ satisfies $(\nu_i)^0$ is irreducible and
so, $\nu_i \in \irr {N_i}$. It is not difficult to see that $\UU =
\{ \nu_0, \nu_1, \dots, \nu_n \}$ is character tower for $\chi$ with
respect to $\NN$.  Let $(U,\delta)$ be the self-stabilizing pair
afforded by $\UU$.  Since $V$ stabilizes each $\eta_i$, we see that
$V$ stabilizes each $\nu_i$, and so, $V$ stabilizes $\UU$.  This
implies that $V \le U$.
\end{proof}

This yields the following useful observation.

\begin{corollary}
Let $\pi$ be a set of primes, let $G$ be a $\pi$-separable group,
and let $\NN$ be a normal $\pi$-series for $G$.  Let $(V,\gamma)$ be
a pair so that $\gamma$ factors as $\alpha\beta$ where $\alpha$ is
$\pi$-special and $\beta$ is $\pi'$-special.  Then $(V,\gamma)$ is
inductive for $\NN$ if and only if $\beta$ is linear and
$(V,\alpha)$ is inductive for $\NN$.
\end{corollary}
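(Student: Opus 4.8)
The plan is to deduce this Corollary directly from the machinery already in place, rather than re-deriving anything from scratch. The key point is that the hypothesis on $(V,\gamma)$ here is weaker than in Theorem \ref{main1}: we are not told that $(V,\gamma)$ is a self-stabilizing pair for some $\NN$-$\pi$-lift. So the strategy is to show that whenever $(V,\gamma)$ is inductive, it automatically \emph{is} (conjugate to) a self-stabilizing pair for the character $\chi=\gamma^G$, and then invoke Theorem \ref{main1} and Lemma \ref{factored}.

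First I would handle the easy direction. If $\beta$ is linear and $(V,\alpha)$ is inductive for $\NN$, then Lemma \ref{factored} immediately gives that $(V,\gamma)=(V,\alpha\beta)$ is inductive for $\NN$; nothing more is needed.

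For the forward direction, suppose $(V,\gamma)$ is inductive for $\NN$. By Lemma \ref{inductive}, $\chi=\gamma^G\in\irr G$ is an $\NN$-$\pi$-lift. Now apply Lemma \ref{containment}: there is a self-stabilizing pair $(U,\delta)$ for $\chi$ with respect to $\NN$ with $V\le U$. Comparing degrees, $\chi(1)=|G:V|\gamma(1)=|G:U|\delta(1)$, so $|U:V|\gamma(1)=\delta(1)$; since $\delta^U$ has $\gamma$ as a constituent (as $\gamma^G=\chi=\delta^G$ and one checks the induced characters match up through the tower), in fact $\delta^U=\gamma$ forced by degree count if $|U:V|=1$. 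The main obstacle is precisely pinning down that $U=V$, i.e.\ that an inductive pair is already ``as large as possible.'' I expect to argue this by running Lemma \ref{commonchains} (or its underlying idea) in both directions: the character tower $\UU=\{\nu_i\}$ built from $(V,\gamma)$ in the proof of Lemma \ref{containment} has $V$ in its stabilizer, and conversely any element stabilizing $\UU$ stabilizes each $\eta_i=(\gamma_{V\cap N_i})$ up to the homogeneity already recorded, forcing it into a conjugate of $V$ — so $U$ is conjugate to $V$ and we may take $U=V$, $\delta=\gamma$. (Alternatively, one can cite that the constituents of $\chi_{N_i}$ determine the tower uniquely up to conjugacy, and that $(V,\gamma)$ realizes that tower.)

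Once $(V,\gamma)$ is identified as a self-stabilizing pair for the $\NN$-$\pi$-lift $\chi$, write $\gamma=\alpha\beta$ with $\alpha$ $\pi$-special and $\beta$ $\pi'$-special as in the hypothesis. Then Theorem \ref{main1}, applied with this self-stabilizing pair, gives that condition (1) holds (namely $\chi$ is an $\NN$-$\pi$-lift, which we know), hence condition (3) holds: $(V,\alpha)$ is inductive for $\NN$ and $\beta$ is linear. This is exactly the desired conclusion, completing the equivalence.
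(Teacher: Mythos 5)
Your easy direction (linear $\beta$ and $(V,\alpha)$ inductive imply $(V,\gamma)$ inductive via Lemma \ref{factored}) matches the paper. The forward direction, however, has a genuine gap at exactly the point you flag as ``the main obstacle'': you try to show that the self-stabilizing pair $(U,\delta)$ produced by Lemma \ref{containment} satisfies $U=V$ (up to conjugacy), so that $(V,\gamma)$ is itself self-stabilizing and Theorem \ref{main1} can be applied to it. That claim is false. The stabilizer of the character tower $\UU=\{\nu_i\}$ built from $(V,\gamma)$ can properly contain $V$: an element stabilizing each $\nu_i=\eta_i^{N_i}$ need not stabilize the $\eta_i$ nor normalize $V$, so the argument ``forcing it into a conjugate of $V$'' does not go through, and the tower being determined up to conjugacy only tells you $V\le U$, not equality. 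Indeed, the paper's own example in Section 4 (with $\pi=\{3\}$, $G=E\ltimes V$, $E$ extraspecial of order $27$, $|V|=49$, and $\NN=\{1,V,G\}$) exhibits pairs $(M_iV,\delta_i)$ that are inductive for $\NN$ with $\delta_i^G=\chi$, while the self-stabilizing pair of $\chi$ with respect to $\NN$ is $(G,\chi)$, since $\chi_V=\chi(1)1_V$ is homogeneous; so an inductive pair need not be self-stabilizing, and your subsidiary degree remark ($\delta^U=\gamma$ ``if $|U:V|=1$'') is circular, assuming what you need to prove.

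The paper's proof avoids this entirely: it keeps the containment $V\le U$ possibly proper and runs a degree argument instead. From Lemma \ref{inductive}, $\chi=\gamma^G$ is an $\NN$-$\pi$-lift; the self-stabilizing pair $(U,\delta)$ from Lemma \ref{containment} has $\delta(1)$ a $\pi$-number (this is where Lemma \ref{degree}, or equivalently Theorem \ref{main1} applied to the $\NN$-$\pi$-lift $\chi$, enters). Then $|G:U|\,\delta(1)=\chi(1)=|G:V|\,\alpha(1)\beta(1)$ gives $\delta(1)=|U:V|\,\alpha(1)\beta(1)$, so the $\pi'$-number $\beta(1)$ divides the $\pi$-number $\delta(1)$, forcing $\beta(1)=1$; Lemma \ref{factored} then yields that $(V,\alpha)$ is inductive. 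To repair your proof, replace the attempt to identify $U$ with $V$ by this divisibility argument; everything else you wrote can stay.
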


\begin{proof}
Suppose first that $\beta$ is linear and $(V,\alpha)$ is inductive
for $\NN$. Then $(V,\gamma)$ is inductive for $\NN$ by Lemma
\ref{factored}.

Conversely, we suppose that $(V,\gamma)$ is inductive.  By Lemma
\ref{inductive}, we know that $\chi = \gamma^G \in \irr G$ is an
$\NN$-$\pi$-lift. {}From Lemma \ref{containment}, $\chi$ has a
self-stabilizing pair $(U,\delta)$ so that $V \le U$. By Lemma
\ref{degree}, we know that $\delta (1)$ is a $\pi$-number.  We have
$|G:U| \delta (1) = \chi (1) = |G:V| \alpha (1) \beta (1)$, and
hence, $\delta (1) = |U:V| \alpha (1) \beta (1)$.  Thus, $\beta (1)$
divides $\delta (1)$. Since $\delta (1)$ is a $\pi$-number and
$\beta$ is $\pi'$-special, we conclude that $\beta (1) = 1$.  We now
apply Lemma \ref{factored} to see that $(V,\alpha)$ is inductive for
$\NN$.
\end{proof}

If $(V,\alpha)$ is a self-stabilizing pair for $G$ with respect to
$\NN$ where $\alpha$ is $\pi$-special, then we show that
$(V,\alpha)$ is inductive for $\NN$.

\begin{corollary} \label{selfind}
Let $\pi$ be a set of primes, let $G$ be a $\pi$-separable group,
and let $\NN$ be a normal $\pi$-series for $G$.  Suppose that
$(V,\alpha)$ is a self-stabilizing pair with respect to $\NN$. If
$\alpha$ is $\pi$-special, then $(V,\alpha)$ is inductive for $\NN$.
\end{corollary}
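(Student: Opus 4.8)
The plan is to apply the work already done, since by this point in the paper nearly all the machinery is in place. The statement asserts that a self-stabilizing pair $(V,\alpha)$ with $\alpha$ $\pi$-special is inductive for $\NN$. Since $\alpha$ is $\pi$-special, it is in particular $\pi$-factored with trivial $\pi'$-special part, so by definition of $\Bpi{G:\NN}$ the character $\chi = \alpha^G$ lies in $\Bpi{G:\NN}$. First I would record that $\Bpi{\cdot : \NN}$ is a compatible set of $\pi$-lifts for $\NN$ (already noted in the excerpt, citing \cite{chains} and \cite{indlift}), so $\chi$ and its $\NN$-restrictions behave well: the irreducible constituents of $\chi_N$ lie in $\Bpi{N:\NN_N}$ for each $N \in \NN$, hence are $\pi$-lifts, and therefore $\chi$ is an $\NN$-$\pi$-lift.

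Now I would invoke Theorem \ref{main1} directly. We have a character $\chi \in \irr G$ with self-stabilizing pair $(V,\gamma)$ where $\gamma = \alpha\beta$, $\alpha$ $\pi$-special and $\beta = 1_V$ the trivial (hence $\pi'$-special) character; here $\gamma = \alpha$. Since $\chi$ is an $\NN$-$\pi$-lift, statement (1) of Theorem \ref{main1} holds, so statement (3) holds: $(V,\alpha)$ is inductive for $\NN$ and $\beta$ is linear. (The linearity of $\beta$ is automatic in this situation.) This gives exactly the conclusion.

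An alternative, even shorter, route that avoids any appeal to Theorem \ref{main1}: observe that $(V,\alpha)$ with $\alpha$ $\pi$-special is the self-stabilizing pair of $\chi = \alpha^G$, and since $\chi \in \Bpi{G:\NN}$ we already argued $\chi$ is an $\NN$-$\pi$-lift; but we could also argue more directly that for each $N \in \NN$, $\alpha_{V\cap N}$ is homogeneous because $(V,\alpha)$ is self-stabilizing, say $\alpha_{V\cap N} = e\eta$ with $\eta$ $\pi$-special, and $\eta^N$ is then an irreducible constituent of $\chi_N$ which lies in $\Bpi{N:\NN_N}$ and so is a $\pi$-lift, i.e.\ $(\eta^N)^0$ is irreducible; this is precisely the inductivity condition. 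I would choose whichever phrasing flows best with the surrounding text, probably the Theorem \ref{main1} version since it is a single line.

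I do not expect a genuine obstacle here; the only thing to be careful about is making sure the bookkeeping lines up — specifically that the self-stabilizing pair of $\alpha^G$ really is $(V,\alpha)$ (so that the $V$ in the hypothesis matches the $V$ produced by a character tower), and that one may legitimately take the $\pi'$-special factor $\beta$ to be the trivial character so that Theorem \ref{main1} applies verbatim. Both are immediate from the definitions reviewed in Section 2, so the proof amounts to citing Theorem \ref{main1} together with the compatibility of $\Bpi{\cdot : \NN}$.
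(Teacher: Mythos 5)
Your proposal is correct and matches the paper's own proof: the paper likewise observes that $\chi=\alpha^G$ lies in $\Bpi{G:\NN}$ and is therefore an $\NN$-$\pi$-lift by the results of \cite{chains} (Theorem B and Theorem 5.2 there), and then applies Theorem \ref{main1} (with trivial $\pi'$-special factor) to conclude that $(V,\alpha)$ is inductive for $\NN$. Your alternative direct verification of inductivity is also fine, but it is just an unwinding of the same argument, so no substantive difference.
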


\begin{proof}
Let $\chi \in \irr G$ be the character associated with $(V,\alpha)$.
We proved in Theorem B and Theorem 5.2 of \cite{chains} that $\chi$
is an $\NN$-$\pi$-lift.  Applying Theorem \ref{main1}, we conclude
that $(V,\alpha)$ is inductive for $\NN$.
\end{proof}

Finally, we have a condition which will be used to find other
self-stabilizers pair using a given self-stabilizing pair.

\begin{lemma} \label{indself}
Let $\pi$ be a set of primes, let $G$ be a $\pi$-separable group,
and let $\NN$ be a normal $\pi$-series for $G$.  Suppose that
$(V,\alpha)$ is a self-stabilizing pair with respect to $\NN$ where
$\alpha$ is $\pi$-special.  Assume $\beta \in \irr V$ is
$\pi'$-special. If $\beta_{V \cap N}$ is homogeneous for all $N \in
\NN$, then $(V,\alpha\beta)$ is a self-stabilizing pair with respect
to $\NN$.
\end{lemma}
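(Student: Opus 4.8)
Throughout write $\NN = \{1 = N_0 \le \cdots \le N_n = G\}$ and $V_i = V \cap N_i$. The idea is to produce an explicit character tower for $(\alpha\beta)^G$ whose stabilizer in $G$ is $V$ and whose associated character is $\alpha\beta$, and then to invoke the uniqueness part of the construction of self-stabilizing pairs from \cite{chains}. Since $(V,\alpha)$ is a self-stabilizing pair, fix a character tower $\SSS = \{1 = \sigma_0, \ldots, \sigma_n = \psi\}$ for $\psi := \alpha^G \in \irr G$ with $\stab G\SSS = V$, so that $\alpha_{V_i} = s_i\alpha_i$ for a positive integer $s_i$ and a $\pi$-special $\alpha_i \in \irr{V_i}$ with $(\alpha_i)^{N_i} = \sigma_i$. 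By hypothesis $\beta_{V_i}$ is homogeneous; write $\beta_{V_i} = b_i\beta_i$ with $\beta_i \in \irr{V_i}$, which is $\pi'$-special because $V_i \trianglelefteq V$. Then $\alpha_i\beta_i \in \irr{V_i}$ is $\pi$-factored, $(\alpha\beta)_{V_i} = s_ib_i(\alpha_i\beta_i)$, and $V$ fixes each of $\alpha_i, \beta_i$, hence each $\alpha_i\beta_i$. Put $\nu_i := (\alpha_i\beta_i)^{N_i}$ and $\chi := (\alpha\beta)^G$. It will suffice to establish: (i) each $\nu_i$ is irreducible and $\UU := \{\nu_0, \ldots, \nu_n = \chi\}$ is a character tower for $\chi \in \irr G$; and (ii) $\stab G\UU = V$. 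Granting these, the relations $(\alpha\beta)_{V_i} = s_ib_i(\alpha_i\beta_i)$ together with $(\alpha_i\beta_i)^{N_i} = \nu_i$ identify, via the uniqueness in \cite{chains}, the character associated to $\UU$ as $\alpha\beta$; thus $(V,\alpha\beta)$ is the self-stabilizing pair determined by $\UU$.

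Statement (i) — in particular the irreducibility of $\chi = (\alpha\beta)^G$ — is the heart of the matter, and I would prove it by induction on $n$, the cases $n \le 1$ being clear. Set $M = N_{n-1}$. Because $\sigma_n = \psi$ is $G$-invariant we have $V = \stab G\SSS = \stab G{\{\sigma_1, \ldots, \sigma_{n-1}\}}$, so $V_{n-1}$ is the stabilizer in $M$ of $\{\sigma_0, \ldots, \sigma_{n-1}\}$; comparing restrictions and inductions then shows, through the uniqueness in \cite{chains}, that $(V_{n-1}, \alpha_{n-1})$ is a self-stabilizing pair for $\sigma_{n-1} \in \irr M$ with respect to $\NN_{n-1} = \{N_0 \le \cdots \le N_{n-1}\}$. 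Since $\beta_{n-1}$ is $\pi'$-special and $(\beta_{n-1})_{V_j}$ is homogeneous for all $j \le n-1$ (this follows from $\beta_{V_j} = b_j\beta_j$), the inductive hypothesis applied inside $M$ gives that $\nu_0, \ldots, \nu_{n-1}$ are irreducible and form a character tower for $\nu_{n-1} := (\alpha_{n-1}\beta_{n-1})^M \in \irr M$ with respect to $\NN_{n-1}$. A Mackey computation, using $M \trianglelefteq G$ and $(\alpha\beta)_{V_{n-1}} = s_{n-1}b_{n-1}(\alpha_{n-1}\beta_{n-1})$, shows $\chi_M = s_{n-1}b_{n-1}\sum_{g}\nu_{n-1}^g$ with $g$ running over a transversal of $VM$ in $G$, so $\nu_{n-1}$ is a constituent of $\chi_M$ and $\UU$ is a character tower for any irreducible constituent of $\chi$ lying over $\nu_{n-1}$; it remains to see that $\chi$ itself is irreducible. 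For that I would write $\langle \chi, \chi\rangle = \langle \alpha\beta, ((\alpha\beta)^G)_V\rangle$, expand by Mackey, and show that the term $\langle (\alpha\beta)_{V \cap V^g}, (\alpha\beta)^g_{V\cap V^g}\rangle$ vanishes whenever $g \notin V$. The analogous term with $\alpha$ replacing $\alpha\beta$ vanishes because $\alpha^G = \psi$ is irreducible; and for $\pi$-factored characters an inner product of two restrictions splits as the contribution of the $\pi$-special factors times that of the $\pi'$-special factors, whence the $\alpha\beta$-term equals zero times the $\beta$-term. Justifying this splitting — equivalently, controlling how the restrictions of $\alpha$ and $\beta$ to $V \cap V^g$ decompose into $\pi$-special and $\pi'$-special pieces — is the step I expect to be the main obstacle, and it is where one must appeal to the structure theory of $\pi$-special characters and of character towers developed in \cite{chains} (together with Theorem 21.7 of \cite{MaWo}).

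For (ii): $V$ fixes each $\alpha_i\beta_i$ and $N_i \trianglelefteq G$, so $V$ fixes each $\nu_i = (\alpha_i\beta_i)^{N_i}$; since $\chi \in \irr G$ is automatically $G$-invariant, $V \le \stab G\UU$. Conversely, take $x \in \stab G\UU$. Because $\beta_i$ is $\pi'$-special, $(\alpha_i\beta_i)^0 = \beta_i(1)\,\alpha_i^0$, and as restriction to $\pi$-elements commutes with induction, $\nu_i^0 = \beta_i(1)\,\sigma_i^0$. Moreover $\sigma_i$ is a constituent of $\psi_{N_i}$ with $\psi \in \Bpi{G : \NN}$, so by Theorem 5.2 of \cite{chains} $\sigma_i \in \Bpi{N_i : \NN_i} = \LL(N_i)$, where $\LL = \Bpi{\cdot : \NN}$ is a compatible set of $\pi$-lifts for $\NN$; hence $\sigma_i^0 \in \Ipi{N_i}$ is irreducible. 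Thus $x$ stabilizes $\nu_i$, hence $\nu_i^0$, hence the irreducible $\sigma_i^0$, and therefore $\sigma_i$ by the corollary to Lemma~\ref{conjugacy} applied to $\LL$. So $x$ fixes every $\sigma_i$, i.e.\ $x \in \stab G\SSS = V$. This gives $\stab G\UU = V$, and together with (i) it completes the argument.
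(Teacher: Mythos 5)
Your overall strategy (build the tower $\nu_i=(\alpha_i\beta_i)^{N_i}$, show its stabilizer is $V$, and invoke uniqueness) is a reasonable shape, but the step you yourself flag as the main obstacle is a genuine gap, not a routine verification. The irreducibility of $\chi=(\alpha\beta)^G$ cannot be obtained by the Mackey computation you sketch: you need $\langle (\alpha\beta)_{V\cap V^g},(\alpha\beta)^g_{V\cap V^g}\rangle=0$ for $g\notin V$, and the proposed justification --- that for $\pi$-factored characters an inner product of restrictions splits as the $\pi$-special contribution times the $\pi'$-special contribution --- is not a theorem. The intersection $V\cap V^g$ is not subnormal in $V$, so the restrictions of $\alpha$ and $\beta$ to it carry no special structure, and in any case inner products of products of characters do not factor this way. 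What you are trying to prove at this point is essentially the content of the Navarro-type injectivity results (Theorem A of \cite{newnav}, Theorem 2.1 of \cite{lattice}, used here as Corollary \ref{map}), which require the inductive-source machinery, not a formal Mackey manipulation. There is also a secondary error in your step (ii): you use $(\alpha_i\beta_i)^0=\beta_i(1)\,\alpha_i^0$, which is valid only when $\beta_i$ is linear; a nonlinear $\pi'$-special character need not be constant on $\pi$-elements, and the lemma allows arbitrary $\pi'$-special $\beta$. The correct way to pin down the stabilizer is the uniqueness of $\pi$-factorization: any element fixing $\alpha_i\beta_i$ fixes its $\pi$-special and $\pi'$-special factors separately, hence fixes $\alpha_i$.

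The paper avoids both difficulties by arguing recursively rather than by inducing up to $G$ directly: it inducts on $|G|$, passes to the penultimate term $N$ of $\NN$, writes $\alpha_{V\cap N}=a\eta$ and $\beta_{V\cap N}=b\xi$, notes that $(V\cap N,\eta)$ is a self-stabilizing pair for the truncated series $\NN^*$ with stabilizer $V$ in $G$, applies the inductive hypothesis inside $N$ to get that $(V\cap N,\eta\xi)$ is self-stabilizing for $\NN^*$, and observes that $V$ is exactly the stabilizer of $(V\cap N,\eta\xi)$. The recursive characterization of self-stabilizing pairs from \cite{chains} then delivers the conclusion, and in particular the irreducibility of $(\alpha\beta)^G$ comes out of that framework for free instead of having to be proved by hand. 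If you want to repair your approach, you should either quote that characterization from \cite{chains} or invoke Corollary \ref{map} (Theorem \ref{lat}) for the irreducibility, rather than attempt the inner-product splitting over $V\cap V^g$.
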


\begin{proof}
We work by induction on $|G|$.  Let $N$ be the penultimate term of
$\NN$.  Let $\alpha_{V \cap N} = a \eta$ and $\beta_{V \cap N} = b
\xi$.  Write $\NN^*$ for the normal $\pi$-series by terminating
$\NN$ at $N$.  We know that $(V \cap N,\eta)$ is a self-stabilizing
pair with respect to $\NN^*$ and that its stabilizer in $G$ is $V$.
By induction, $(V \cap N,\eta\xi)$ is a self-stabilizing pair with
respect to $\NN^*$.  Obviously, $\eta\xi$ is invariant in $V$ and
since $V$ is the stabilizer of $(V \cap N,\eta)$, it follows that it
stabilizes $(V \cap N,\eta\xi)$.  We conclude that $(V,\alpha\beta)$
is a self-stabilizing pair for $\NN$.
\end{proof}

For the next proof, we need the theory of source maps that we
developed in \cite{lattice}.  Following \cite{dadeind} and
\cite{indsou}, we say that a character pair $(H,\phi)$ is an {\it
inductive source} if induction is an injection from $\irr {T \mid
\phi}$ to $\irr G$ where $T$ is the stabilizer of $(H,\phi)$ in $G$.
Let $G$ be a finite group, and let $\pair G$ be the set of all
character pairs in $G$. We define {\it a source map} on $G$ to be a
function $\MMM : \pair G \rightarrow \pair G$ that satisfies for all
pairs $(H,\theta) \in \pair G$: (1) $\MMM (H,\theta) \le
(H,\theta)$, (2) $\MMM (H,\theta)$ is an inductive source in $H$,
and (3) if $\MMM (H,\theta) < (H,\theta)$, then $I_H (\MMM
(H,\theta)) < H$. Given a source map $\MMM$ on $G$ and a pair
$(H,\theta) \in \pair G$, we set $\MMM (H,\theta) = (M,\mu)$.  Let
$T$ be the stabilizer in $H$ of $(M,\mu)$.  Since $(M,\mu)$ is
inductive source for $H$, we know that induction is a bijection from
$\irr {T \mid \mu}$ to $\irr {H \mid \mu}$.  Since $\theta \in \irr
{M  \mid \mu}$, there is a unique character $\tau \in \irr {T \mid
\mu}$ so that $\tau^G = \theta$.  Thus, given $\MMM$, we can define
a second map $\SSS_\MMM: \pair G \rightarrow \pair G$ by $\SSS_\MMM
(H,\theta) = (T,\tau)$ as defined above.

In \cite{chains}, we defined a source map $\ZZ$ from $\pair G$ to
$\pair G$ as follows. Consider $(H,\theta) \in \pair G$. If
$\theta_{N \cap H}$ is homogeneous for all $N \in \NN$, then $\ZZ
(H, \theta) = (H,\theta)$.  Otherwise, define $\ZZ (H,\theta)$ to be
the $\pi$-factored pair $(N \cap H,\gamma)$ that is minimal such
that $N \in \NN$ and $\gamma$ is not $H$-invariant. We showed in
Lemma 4.3 of \cite{chains} that $\ZZ$ is a $\pi$-source map, i.e.
that $\ZZ (H,\theta) \le \ZZ (\SSS_\ZZ (H,\theta))$ for all
$(H,\theta) \in \pair G$, and that the nucleus determined by $\ZZ$
is a self-stabilizing pair.

We will say that a source map $\MM$ is {\it $\pi$-closed} if $\MM$
is a source map and the following condition holds.  If $(H,\theta)
\in \pair G$ with $\MM (H,\theta) = (S,\sigma)$ where $\sigma$ is
$\pi$-special, then for every $\pi'$-special character $\delta \in
\irr S$, the pair $(S,\sigma\delta)$ is an inductive source for $H$.
Note that this is a modification of the definition of $\pi$-closed
found in \cite{lattice}.  The following theorem is Theorem 2.1 of
\cite{lattice}.  It is a generalization of a result of Navarro:
Theorem A in \cite{newnav}.  We claim that the proof of this theorem
under our modified definition of $\pi$-closed is identical to the
proof using the definition found in \cite{lattice}.

\begin{theorem}\label{lat}
Let $\pi$ be a set of primes, let G be a finite $\pi$-separable
group, and let M be a $\pi$-closed source map on G so that $\MM (H,
\theta) \le \MM (\SSS_\MM (H, \theta))$ for all $(H, \theta) \in
\pair G$. Suppose $\chi \in \Bpi {G :M}$ with $\MM$-nucleus $(W,
\gamma)$. Let $\AAA$ be the set of $\pi'$-special characters of $W$.
Then the map $\delta \mapsto (\gamma\delta)^G$ is an injection from
$\AAA$ to $\irr G$.
\end{theorem}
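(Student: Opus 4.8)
The plan is to argue by induction on $|G|$, running the nucleus construction for $(G,\chi)$ and reducing at each stage to a proper subgroup; since the $\pi$-closed hypothesis is used only to produce inductive sources of the form $(S,\sigma\delta)$ with $\sigma$ $\pi$-special, and that is exactly what our modified definition supplies, the proof should be a transcription of the proof of Theorem 2.1 of \cite{lattice}. First I would dispose of the base case, in which $\MM(G,\chi)=(G,\chi)$: then $(W,\gamma)=(G,\chi)$, so $\gamma=\chi$ is $\pi$-special (this is what $\chi\in\Bpi{G:\MM}$ asserts in this case), and for $\delta\in\AAA$ the character $(\gamma\delta)^G=\chi\delta$ is a product of a $\pi$-special and a $\pi'$-special character, hence irreducible; because the factorization of an irreducible character as a $\pi$-special times a $\pi'$-special factor is unique (see \cite{MaWo}), the map $\delta\mapsto\chi\delta$ is injective.

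For the inductive step I would set $(M,\mu)=\MM(G,\chi)$ and $(T,\tau)=\SSS_\MM(G,\chi)$, so that $T=I_G(M,\mu)<G$ by axiom (3) of a source map, $\tau^G=\chi$, $\tau\in\irr{T\mid\mu}$, and $\gamma^T=\tau$. I would record two preliminary facts: $M\le W$ (it lies below every subgroup occurring in the nucleus chain, using $\MM(H,\theta)\le\MM(\SSS_\MM(H,\theta))$), and every $\MM$-image occurring in the nucleus chain of $\chi$ — in particular $\mu$ — is $\pi$-special, since $\chi\in\Bpi{G:\MM}$. Because the nucleus of $(T,\tau)$ is again $(W,\gamma)$, we have $\tau\in\Bpi{T:\MM}$; since $|T|<|G|$, the inductive hypothesis gives that $\delta\mapsto(\gamma\delta)^T$ is an injection of $\AAA$ into $\irr T$. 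It then remains to push this up from $T$ to $G$: for $\delta\in\AAA$, picking an irreducible constituent $\delta_0$ of $\delta_M$, the character $(\gamma\delta)^T$ lies over the $\pi$-factored character $\mu\delta_0$ of $M$, and $\pi$-closedness — applicable with $\sigma=\mu$ since $\mu$ is $\pi$-special and $\MM(G,\chi)=(M,\mu)$ — says that $(M,\mu\delta_0)$ is an inductive source for $G$. From this I would deduce that $(\gamma\delta)^G=((\gamma\delta)^T)^G$ is irreducible and that $(\gamma\delta_1)^G=(\gamma\delta_2)^G$ forces $(\gamma\delta_1)^T=(\gamma\delta_2)^T$, whence $\delta_1=\delta_2$ by the inductive hypothesis, completing the induction.

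The step I expect to be the main obstacle is this last one: one must thread the $\pi'$-special twist $\delta$ of $W$ down the nucleus chain to compatible $\pi'$-special twists of the successive ($\pi$-special) characters $\mu_i$, and verify at each level that the twisted pair is again an inductive source and that the induced character remains irreducible, so that the nucleus chain of $(G,\chi)$ is matched, level by level, with that of $(G,(\gamma\delta)^G)$. This is precisely the bookkeeping that the $\pi$-closed hypothesis is designed to control; I would follow the computation in \cite{lattice} verbatim, observing only that at every place where the original definition of $\pi$-closed was invoked the relevant character is $\pi$-special, so the modified definition applies just as well.
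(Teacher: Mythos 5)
Your proposal takes essentially the same route as the paper: the paper gives no independent argument for Theorem~\ref{lat}, but simply asserts that the proof of Theorem 2.1 of \cite{lattice} goes through verbatim under the modified definition of $\pi$-closed because the inductive-source condition is only ever invoked for pairs $(S,\sigma)$ with $\sigma$ $\pi$-special, and this is exactly the observation your final paragraph makes. Your inductive sketch (base case with $\gamma=\chi$ $\pi$-special, then descent to the Clifford correspondent over $\MM(G,\chi)$) is a reasonable reconstruction of that cited argument, so it neither diverges from nor falls short of what the paper itself supplies.
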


We now have the following corollary to this theorem which yields for
self-stabilizing pairs a result similar to Navarro's theorem.

\begin{corollary} \label{map}
Let $\pi$ be a set of primes, let $G$ be a $\pi$-separable group,
and let $\NN$ be a normal $\pi$-series for $G$.  Suppose that
$(V,\alpha)$ is a self-stabilizing pair with respect to $\NN$ where
$\alpha$ is $\pi$-special.  Then the map $\beta \mapsto
(\alpha\beta)^G$ is an injection from the $\pi'$-special characters
of $V$ to $\irr G$.
\end{corollary}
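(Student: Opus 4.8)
The plan is to deduce Corollary \ref{map} from Theorem \ref{lat} applied to the source map $\ZZ$ on $G$ constructed in \cite{chains}. I would begin by setting $\chi := \alpha^G$. Since $(V,\alpha)$ is a self-stabilizing pair with respect to $\NN$, we have $\chi \in \irr G$, and since $\alpha$ is $\pi$-special we get $\chi \in \Bpi {G : \NN}$; in the notation of Theorem \ref{lat} with $\MM = \ZZ$ this says $\chi \in \Bpi {G : \ZZ}$. Moreover the $\ZZ$-nucleus of $\chi$ is a self-stabilizing pair for $\chi$ with respect to $\NN$, hence is $G$-conjugate to $(V,\alpha)$; replacing $(V,\alpha)$ by a $G$-conjugate (which affects nothing in the statement) I may assume that $(V,\alpha)$ is the $\ZZ$-nucleus of $\chi$. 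Thus it will suffice to check that $\ZZ$ meets the hypotheses of Theorem \ref{lat}, since that theorem then says exactly that $\delta \mapsto (\alpha\delta)^G$ is an injection from the $\pi'$-special characters of $V$ into $\irr G$.

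Next I would verify the hypotheses on $\ZZ$. That $\ZZ$ is a source map on $G$, and in particular that the required minimal $\pi$-factored constituent always exists, is established in \cite{chains}; the only point worth recalling is that whenever $\ZZ(H,\theta) = (S,\sigma)$ with $S < H$ one has $S = N \cap H$ for some $N \in \NN$, so that $S$ is normal in $H$ and the inductive-source clause is immediate from the Clifford correspondence. The compatibility condition $\ZZ(H,\theta) \le \ZZ(\SSS_\ZZ(H,\theta))$ for all $(H,\theta) \in \pair G$ is Lemma 4.3 of \cite{chains}. It then remains to check that $\ZZ$ is $\pi$-closed in the modified sense: given $(H,\theta) \in \pair G$ with $\ZZ(H,\theta) = (S,\sigma)$ and $\sigma$ $\pi$-special, and given any $\pi'$-special $\delta \in \irr S$, one must show that $(S,\sigma\delta)$ is an inductive source for $H$. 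If $S = H$ this is trivial, since induction from $H$ to $H$ is the identity map. Otherwise $S = N \cap H$ for some $N \in \NN$, so $S$ is normal in $H$; writing $T$ for the stabilizer of the pair $(S,\sigma\delta)$ in $H$ — which equals $\stab H{\sigma\delta}$ since $S$ is normal in $H$ — the Clifford correspondence shows that induction is a bijection from $\irr {T \mid \sigma\delta}$ onto $\irr {H \mid \sigma\delta}$, hence an injection into $\irr H$. So $(S,\sigma\delta)$ is an inductive source for $H$, and $\ZZ$ is $\pi$-closed.

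With these verifications done, Theorem \ref{lat} applied with $\MM = \ZZ$, $\chi \in \Bpi {G : \ZZ}$, and $\ZZ$-nucleus $(W,\gamma) = (V,\alpha)$ yields precisely the assertion of the corollary. The only step that is more than a citation is the $\pi$-closedness of $\ZZ$, and I expect no real obstacle there: it comes down to the observation that a subgroup of the form $N \cap H$ with $N$ normal in $G$ is normal in $H$, so the inductive-source property of $(N \cap H, \sigma\delta)$ in $H$ follows from ordinary Clifford theory irrespective of what the character $\sigma\delta$ is. The only thing one needs to be careful about is keeping the definitions of ``source map,'' ``inductive source,'' and ``$\pi$-closed'' (in its modified form) straight and matching them against the description of $\ZZ$ from \cite{chains}.
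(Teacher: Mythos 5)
Your proposal is correct and follows essentially the same route as the paper: identify $(V,\alpha)$ with the $\ZZ$-nucleus of $\chi=\alpha^G$, verify that the source map $\ZZ$ from \cite{chains} is $\pi$-closed (using that $\ZZ(H,\theta)=(N\cap H,\sigma)$ with $N\cap H$ normal in $H$, so Clifford theory gives the inductive-source property), and then quote Theorem \ref{lat}. Your write-up is just a more detailed version of the paper's argument, including the harmless extra case distinction when $\ZZ(H,\theta)=(H,\theta)$.
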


\begin{proof}
Let $\chi = \alpha^G$.  We know that $(V,\alpha)$ is the
$\ZZ$-nucleus of $\chi$.  The result will follow from Theorem
\ref{lat} if we can show that $\ZZ$ is $\pi$-closed.  Suppose
$(H,\theta) \in \pair G$ and $\ZZ (H,\theta) = (S,\sigma)$ where
$\sigma$ is $\pi$-special.  We know that $S = H \cap N$ for some $N
\in \NN$.  This implies that $S$ is normal in $H$.  Thus, $(S,\sigma
\delta)$ will be an inductive source for all $\pi'$-special $\delta
\in \irr S$.
\end{proof}

We can now prove Theorem \ref{main2}.

\begin{proof}[Proof of Theorem \ref{main2}]
We have $\phi \in \Ipi G$ and $\chi \in \Bpi {G : \NN}$ so that
$\chi^0 = \phi$ where $(V,\gamma)$ is a self-stabilizer pair for
$\chi$.  Since $\chi \in \Bpi {G : \NN}$, we know that $\gamma$ is
$\pi$-special.  By Corollary \ref{selfind}, $(V,\gamma)$ is
inductive. Then applying Lemma \ref{indself}, we deduce that
$(V,\gamma \beta)$ is inductive if $\beta \in \irr {V/V'}$ with
$\pi'$-order.  Next, we use Corollary \ref{map} to see that there is
an injection from the characters in $\irr {V/V'}$ with $\pi'$-order
to $\irr G$ by $\beta \mapsto (\gamma \beta)^G$.  Observe that
$((\gamma \beta)^G)^0 = (\gamma^0 \beta^0)^G = (\gamma^0)^G =
(\gamma^G)^0 = \chi^0 = \phi.$ Hence, we have an injection from the
characters in $\irr {V/V'}$ with $\pi'$-order to the $\pi$-lifts of
$\phi$, and this gives the desired conclusion.
\end{proof}

\section{An example}

\medskip
In this section, we construct a group $G$, and a partial character
$\phi$ of $G$.  We will study the $\pi$-lifts of $\phi$, and show
that they have a number of properties that illustrate the work we
have done in this paper.  In particular, we will show that the
$\pi$-lifts of this group are not determined by a single subgroup
answering the question of Cossey that was raised in the
introduction.  We will also show that that the number of $\pi$-lifts
does not divide $\card G$ which was another question that Cossey has
raised.   We will show that for different $\pi$-normal series $\NN$,
one gets different sets of $\NN$-$\pi$-lifts.   We will also show
that this group has an inductive pair that is not self-stabilizing.

\begin{construction} \rm
Let $E$ be an extra-special group of order $3^3$ (and exponent $3$).
Also, take $V$ to be an elementary abelian group of order $7^2$.  We
write $V = V_1 \times V_2$ where $V_1$ and $V_2$ both have order
$7$.  Let $M_1$ and $M_2$ be maximal subgroups of $E$ so that $M_1
\ne M_2$. We define an action of $E$ on $V$ so that $E$ acts on
$V_1$ with $M_1$ as its kernel and $E$ acts on $V_2$ with $M_2$ as
its kernel.  Let $G$ be the semi-direct product of $E$ acting on
$V$. Hence, $|G| = 3^3 \cdot 7^2$.  Now, $\irr {G/V} = \irr E$ has
$9$ linear characters and $2$ characters of degree $3$. Also, $\irr
V \setminus \{ 1 \}$ has $4$ orbits of size $3$ (consisting of the
characters with either $V_1$ or $V_2$ as their kernels) and $4$
orbits of size $9$ under the action (the characters that do not have
either $V_1$ or $V_2$ as their kernels).  Each of these characters
extend to their stabilizers in $G$ to yield $36$ characters of
degree $3$ and $12$ characters of degree $9$ in $\irr G$ that do not
have $E$ in their kernels.
\end{construction}

We take $\pi = \{ 3 \}$.  It is not difficult to see that $\Bpi G =
\irr {G/V}$.  Fix a character $\chi \in \Bpi G$ with $\chi (1) = 3$.
Let $\phi = \chi^0$.

\begin{lemma}
There are $13$ lifts of $\phi$ in $\irr G$.
\end{lemma}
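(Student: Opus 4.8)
The idea is to count the lifts of $\phi$ directly using the character theory of $G$ laid out in the Construction, since $G$ is small and completely explicit. First I would fix the character $\chi \in \Bpi G$ of degree $3$; it lies over $V$ with $V$ in its kernel (since $\Bpi G = \irr{G/V}$), and corresponds to one of the two degree-$3$ characters of $E$. Note $\chi$ is afforded by a nontrivial linear character, say $\lambda$, of $E/M$ where $M = M_1 \cap M_2 = Z(E)$ is the center; more precisely $\chi$ is inflated from the unique faithful irreducible character of $E/\ker$ of degree $3$, so $\ker\chi = V$ exactly. Then $\phi = \chi^0$ is the restriction of this degree-$3$ character to the set of $3$-elements, and any lift $\psi$ of $\phi$ satisfies $\psi(1) = \chi(1) = 3$ (degrees are preserved on passing to $G^0$ for a $\pi$-lift since $\psi(1)$ is a $\pi$-number here; more directly, $\psi^0 = \phi$ forces $\psi(1) = \phi(1) = 3$).

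So the task reduces to counting those $\psi \in \irr G$ with $\psi(1) = 3$ and $\psi^0 = \phi$. From the Construction, the irreducible characters of degree $3$ in $\irr G$ are of two kinds: the $2$ characters inflated from the degree-$3$ characters of $E = G/V$, and the $36$ characters lying over the nontrivial linear characters of $V$ whose $G$-orbit has size $3$ (these are the $\theta \in \irr V$ with $V_1$ or $V_2$ as kernel, whose stabilizer in $G$ has index $3$, and $\theta$ extends to the stabilizer, then induces irreducibly to degree $3$). I would argue that of the $2$ inflated characters, only $\chi$ itself has $\chi^0 = \phi$ — the other one, $\overline\chi$, is the complex conjugate and has $\overline\chi{}^0 = \overline\phi \ne \phi$ since $\phi$ is not real (its values on $3$-elements are the usual non-real character values of the degree-$3$ character of the extraspecial group of exponent $3$). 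That accounts for $1$ lift. Then among the $36$ characters over nontrivial $\theta \in \irr V$ with a $V_i$ as kernel: such a $\psi$ has $\psi^0 = \phi$ iff $\psi$ and $\chi$ agree on $G^0 = $ the set of $3$-elements. Since $\psi = \mu^G$ where $\mu \in \irr{H}$ extends $\theta$ and $H$ is the stabilizer (of index $3$, with $H/V$ a maximal subgroup of $E$), restricting to $3$-elements: a $3$-element of $G$ is conjugate into $H$, and on $H \cap G^0$ the value of $\mu^0$ is the value of $\mu$ restricted to $3$-elements, which since $\theta^0$ is trivial on the (trivial, being $7$-group) $3$-part equals the inflation of a linear character of $H/V$. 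Computing $(\mu^G)^0$ and comparing with $\phi$, I expect exactly $12$ of the $36$ to survive — one for each of the $4$ orbits times... — actually the count should land on $12$, giving $1 + 12 = 13$.

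The main obstacle, and the step I would grind through carefully, is the last one: showing precisely $12$ of the $36$ degree-$3$ monomial characters restrict to $\phi$ on $G^0$. The clean way is: $\psi^0$ for $\psi = \mu^G$ depends only on the restriction of $\mu$ to the $3$-elements of $H$, i.e. on the linear character $\mu/\theta$ of $H/V \cong$ (maximal subgroup of $E$) inflated to $H$, \emph{and} on which maximal subgroup $H/V$ is. One checks $(\mu^G)^0$ is determined by inducing this data up; it equals $\phi$ exactly when the linear character of $H/V$ is the restriction to that maximal subgroup of the central character underlying $\chi$ — and since $Z(E)$ has order $3$, for each of the $4$ choices of a pair $(V_i\text{-kernel orbit}, \text{maximal subgroup})$ contributing, there are $3$ extensions $\mu$ of $\theta$ differing by $\widehat{H/V}$ but only those with the right value on $Z(E)$ survive; bookkeeping over the $4$ size-$3$ orbits gives $12$. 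Alternatively — and this is the argument I would actually present — once Theorem~\ref{main2} is available it gives a \emph{lower} bound: here $\chi$ has self-stabilizing pair $(V,\gamma)$ with $V$ abelian of order $7^2$ coprime to $\pi$, so $\gpindex V{V'}_{\pi'} = |V| = 49$... which is too big, so in fact the self-stabilizing pair must be smaller, $(\widetilde V, \gamma)$ with $\widetilde V/V$ the relevant index-$3$ subgroup giving $\widetilde V$ of order $3 \cdot 49$, $\widetilde V' $ of order... — I would use Theorem~\ref{main1} to pin down that every lift is induced from an inductive pair and enumerate the inductive pairs directly. Either way, the numerical heart is the identity $1 + 12 = 13$, and verifying the "$12$" is where the real work lies; the rest is assembling the explicit character data already recorded in the Construction.
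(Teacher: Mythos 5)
Your overall strategy is the same as the paper's (a direct Clifford--theoretic enumeration: every lift has degree $3$, the only inflated lift is $\chi$ itself, and the remaining lifts must lie over the orbit-size-$3$ characters of the normal subgroup $V$ of order $7^2$), and your target count $1+12=13$ is correct. But there is a genuine gap at precisely the step you yourself identify as the heart: you never establish that exactly $12$ of the $36$ monomial degree-$3$ characters restrict to $\phi$; you write ``I expect'' and ``the count should land on $12$.'' Moreover the bookkeeping you sketch is internally inconsistent: an $H$-invariant $\theta \in \irr V$ (with $H = M_iV$ its stabilizer) has \emph{nine} extensions to $H$, not three, since the extensions form a torsor under $\irr {H/V}$ and $|H/V| = 9$; the correct count is that of these nine, exactly the three whose $H/V$-part restricts to $\lambda$ on $Z(E)$ yield lifts, giving $4 \times 3 = 12$. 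As written (``3 extensions \dots only those with the right value on $Z(E)$ survive'') the arithmetic cannot produce $12$, and your stated criterion (``the restriction to that maximal subgroup of the central character underlying $\chi$'') does not parse, since $\lambda$ is a character of $Z(E)$, not of $E$. You also leave unproved the positive direction: that a candidate with the right central value satisfies $\psi^0 = \phi$ on \emph{all} of $G^0$, not merely over $Z$. The paper handles this by writing $\psi = (\delta\hat\beta)^G$ with $\hat\beta$ the canonical (order-$7$) extension of $\beta$, so that $\hat\beta^0 = 1$, hence $\psi^0 = (\delta^G)^0$, and $\delta^G = \chi$ because $\delta$ lies over $\lambda$ and $G/V$ is extraspecial; you gesture at the first ingredient (triviality of the order-$7$ part on $3$-elements) but never assemble the argument.

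The alternative you say you ``would actually present'' would fail outright: the self-stabilizing pair of $\chi$ is not a pair on the order-$49$ subgroup (it is $(G,\chi)$ for $\NN = \{1, V, G\}$, or $(M_iV,\delta_i)$ for $\NN_i$, as the paper notes after the lemma), and in any case Theorem \ref{main2} gives only a \emph{lower} bound on the number of $\NN$-$\pi$-lifts, so it cannot by itself deliver the exact count $13$, nor does Theorem \ref{main1} classify all lifts of $\phi$ (only the $\NN$-$\pi$-lifts). So the correct reading of your proposal is: right route, right answer, but the decisive quantitative verification is missing and the offered fallback is not viable.
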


\begin{proof}
We want to count the number of lifts of $\phi$.  Let $X/V$ be the
center of $G/V$.  We know that $\chi$ is fully ramified with respect
to $G/X$, and we take $\hat\lambda \in \irr {X/V}$ to be the unique
irreducible constituent of $\chi_X$. It follows that $\hat\lambda^0$
is the unique irreducible constituent of $\chi^0_X$. Let $Z$ be the
center of $G$ and observe that $Z$ is the center of $E$. Notice that
$X = Z \times V$, and so, restriction is a bijection from $\irr
{X/V}$ to $\irr Z$. We write $\lambda = \hat\lambda_Z$, and we have
$\hat\lambda = \lambda \times 1_V$.  If $\eta \in \irr X$ with
$\eta^0 = \hat\lambda^0$, then it is not difficult to see that $\eta
= \lambda \times \alpha$ for some character $\alpha \in \irr V$.

Suppose $\psi$ is a lift of $\phi$.  Let $\xi$ be an irreducible
constituent of $\psi_X$.  Now, $X$ is abelian, so $\xi$ is linear.
This implies that $\xi^0$ is irreducible.  We see that $\xi^0$ is a
constituent of $\psi^0_X = \chi^0_X$.  This implies that $\xi^0 =
\hat\lambda^0$, and hence, $\xi = \lambda \times \alpha$ for some
character $\alpha \in \irr V$.  If $\alpha = 1_V$, then we have $\xi
= \hat\lambda$, and hence, $\psi = \chi$.  Suppose $\alpha \ne 1_V$.
If $\alpha$ lies in an orbit of size $9$ under the action of $E$,
then $9$ divides $\psi (1)$ (since $\alpha$ is a constituent of
$\psi_V$).  Since $\psi (1) = \chi (1) = 3$, this cannot occur.
Hence, $\alpha$ must lie in an orbit of size $3$.  This implies that
either $V_1$ or $V_2$ is in the kernel of $\alpha$.  Let $\BB = \{
\beta \in \irr V \mid \ker {\beta} = V_i ~{\rm for}~ i = 1,2 \}$. We
have shown that if $\psi$ is a lift of $\phi$ other than $\chi$,
then the irreducible constituents of $\psi_X$ have the form $\lambda
\times \beta$ where $\beta \in \BB$.

We now show that if $\psi \in \irr {G \mid \lambda \times \beta}$
where $\beta \in \BB$, then $\psi$ is a lift of $\phi$.  Without
loss of generality, we may assume that $\ker {\beta} = V_2$.  Since
$V = V_1 \times V_2$ and $M_1$ centralizes $V_1$, we have that $M_1$
stabilizes $\beta$.  Also, $|G:M_1V| = 3$, so $M_1V$ is the
stabilizer of $\beta$ in $G$.  Since $V$ is a Hall subgroup, we see
that $\beta$ has a canonical extension $\hat\beta \in \irr {M_1V}$.
By Clifford's and Gallagher's theorems, we know there is a unique
character $\delta \in \irr {M_1V/V}$ so that $(\delta \hat\beta)^G =
\psi$.  We know that $\delta$ is an extension of $\hat\lambda$, and
since $G/V$ is extra-special, we know that $\delta^G = \chi$.
Observe that $\hat\beta^0 = 1$, so $\psi^0 = ((\delta
\hat\beta)^G)^0 = (\delta^G)^0 = \chi^0 = \phi$.

Notice that $\BB$ contains $12$ characters which form $4$ orbits of
size $3$ under the action of $G$.  Fixing $\beta \in \BB$, we see
that $\lambda \times \beta$ has $3$ extensions to its stabilizer.
Thus, each of the orbits of $\BB$ yields $3$ lifts of $\phi$.  Thus,
we obtain in total $13$ lifts of $\phi$ by including $\chi$ with the
$12$ lifts coming from the various orbits on $\BB$.
\end{proof}

Notice that this shows that there is no pair $(U,\delta)$ where
$\delta$ is $\pi$-special so that all lifts of $\phi$ can be
obtained as images of a bijection $\gamma  \mapsto (\delta\gamma)^G$
where $\gamma$ is a $\pi'$-special and linear character of $U$ since
the $\pi'$-special, linear characters will form a group isomorphic
to a subgroup of $U/U'$ and hence, the number of such characters
would divide $|U|$ and hence $|G|$.  But, we showed that $\phi$ has
$13$ lifts and $13$ does not divide $|G|$.

We now show that we can find an inductive pair that is not
self-stabilizing.  We set $\NN = \{ 1, V, G \}$, $\NN_1 = \{ 1, V,
M_1 V, G \}$, and $\NN_2 = \{ 1, V, M_2 V, G \}$. Observe that
$\chi_V = \chi (1) 1_G$, so $(G,\chi)$ is the self-stabilizing pair
for $\chi$ with respect to $\NN$. On the other hand, since for $i =
1$ or $2$ we know that $\chi$ is induced from $\delta_i \in \irr
{M_i V}$ where $\delta_i$ is an extension of $\hat\lambda$ to
$M_iV$, it follows that $(M_i V,\delta_i)$ is the self-stabilizing
pair $\chi$ with respect to $\NN_i$.  This implies that each $(M_i
V,\delta_i)$ is an example of a pair that is inductive for $\NN$ but
not a self-stabilizing pair with respect to $\NN$.  Thus, we have
found three different pairs $(G,\chi)$, $(M_1V,\delta_1)$, and
$(M_2V,\delta_2)$ inducing $\chi$ that are inductive for $\NN$.
Notice that $(M_1V,\delta_1)$ is also inductive for $\NN_1$, but is
not inductive for $\NN_2$.  Also, $(M_iV,\delta_2)$ is inductive for
$\NN_2$, but is not inductive for $\NN_1$.  Thus, the pairs $(MV_i,
\delta_i)$ are inductive for $\NN$, but not self-stabilizing, and
hence, we have our pairs that are inductive but not
self-stabilizing.

We now show that $\chi$ is the only $\pi$-lift for $\phi$ with the
property that for every normal subgroup $N$ that the irreducible
constituents of $\chi_N$ are $\pi$-lifts.  Observe that $M_1 V = V_1
\times M_1 V_2$, and so, $(M_1 V)' = V_2$.  It follows that $\card
{M_1 V/(M_1 V)'}_{3'} = 7$.  In fact, $M_1 V/ (M_1 V)' \cong M_1
\times V_1$.  Let ${\cal M}_1$ be the seven characters obtained by
$\zeta \mapsto (\beta_1 \zeta)^G$ as $\zeta$ runs through the linear
characters of $\irr {M_1V}$ with order $7$.  In other word, $\zeta$
runs through the characters in $\irr {V_1}$.  We know from Theorem
\ref{main2} that all of the characters in ${\cal M}_1$ are
$\NN_1$-$\pi$-lifts. If $1 \ne \zeta \in \irr {V_1}$, then $M_1V$ is
the stabilizer in $G$ of both $\zeta$ and $\delta_1 \zeta$.  It
follows that $ZV$ is the stabilizer in $M_2V$ of $\hat\lambda
\zeta$.  In other words, $(\hat\lambda \zeta)^{M_2V}$ is
irreducible.  Applying Mackey's theorem, the irreducible
constituents of $(\delta_1 \zeta)^G|_{M_2V}$ have degree $3$ and
thus are not $\pi$-lifts.  Thus, $\chi$ is the only character in
${\cal M}_1$ that is an $\NN_1$-$\pi$-lift.

In a similar manner, we can define ${\cal M}_2$ with respect to
$(M_2 V)$.   Working as above, we will see that ${\cal M}_2$
contains $7$ characters that are $\NN_2$-$\pi$-lifts of $\phi$, and
only, $\chi$ is also an $\NN_1$-$\pi$-lift.  It follows that ${\cal
M}_1 \cap {\cal M}_2 = \{ \chi \}$, and so, $\card {{\cal M}_1 \cup
{\cal M}_2} = 13$.  We can conclude that ${\cal M}_1 \cup {\cal
M}_2$ is the set of all lifts of $\phi$.  Notice that this implies
that all lifts of $\phi$ are $\NN$-$\pi$-lifts.  In particular, it
is possible to have $\NN$-$\pi$-lifts that do not arise from a
self-stabilizing pair.

\end{document}